\theoremstyle{cupthm}
\numberwithin{equation}{section}
\newtheorem{theorem}{Theorem}[section]
\newtheorem{cor}[theorem]{Corollary}
\newtheorem{prop}[theorem]{Proposition}
\newtheorem{rem}{Remark}[section]
\begin{document}
\runningtitle{Coronae graphs and their $\alpha$-eigenvalues}
\title{Coronae graphs and their $\alpha$-eigenvalues}
\author[1]{Muhammad Ateeq Tahir}
\address[1]{School of Mathematical Sciences, MOE-LSC, SHL-MAC, Shanghai Jiao Tong University Shanghai 200240, P. R. China. \email{ateeqtahir@sjtu.edu.cn}.}
\author[2]{Xiao-Dong Zhang}
\address[2]{School of Mathematical Sciences, MOE-LSC, SHL-MAC, Shanghai Jiao Tong University Shanghai 200240, P. R. China. \email{xiaodong@sjtu.edu.cn}.}

\authorheadline{Muhammad Ateeq Tahir and Xiao Dong Zhang}


\support{This work is supported by  the National Natural Science Foundation of China (No.11531001), the Montenegrin-Chinese Science and Technology Cooperation Project (No.3-12) and the Joint NSFC-ISF Research Program (jointly funded by the National Natural Science Foundation of China and the Israel Science Foundation (No. 11561141001).}

\begin{abstract}
Let $G_1$ and $G_2$ be two simple connected graphs.	 The invariant \textit{coronal} of graph is used in order to determine the $\alpha$-eigenvalues of four different types of graph equations that are $G_1 \circ G_2, G_1\lozenge G_1$ and the other two`s are  $G_1 \odot G_2$ and $G_1 \circleddash G_2$ which are obtained using the $R$-graph of $G_1$. As an application we construct infinitely many pairs of non-isomorphic $\alpha$-Isospectral graph.
\end{abstract}

\classification{primary 05C50; secondary 05C90 }
\keywords{Corona, Edge corona, $R$-vertex corona, $R$-edge corona, $\alpha$-eigenvalue, $\alpha$-Isospectral graph, }

\maketitle

\section{Introduction}
%
%
%
%
%
In this paper we consider a simple, undirected and connected graphs. Let $G$ be a graph with vertex set $V(G)$ and edge set $E(G)$ with $n$ vertices and $m$ edges respectively. For any vertex $u\in V(G)$, $d_G(u)$ is the degree of that vertex. The adjacency matrix $A(G)$ of the graph $G$ is defined by its entries $a_{uv}=1$ when $u\sim v$ in $G$ and $0$ otherwise, and the degree matrix $D(G)$ of $G$ is the diagonal matrix whose entries are the degree of the vertices of $G$. The Laplacian  matrix $L(G)$ is defined as $L(G)=D(G-A(G))$. The signless Laplacian matrix $Q(G)$ is defined as $Q(G)=D(G)+A(G)$. Two non-isomorphic graphs are said to be adjacency (Laplacian, signless Laplacian) Isospectral if they have same adjacency (Laplacian, signless Laplacian) eigenvalue.\\
A lot of new graphs are obtained using new operations such as disjoint union, the Cartesian product, Kronecker product, the corona, the edge corona, the subdivision-vertex join, the subdivision-edge join, the vertex neighborhood corona and the subdivision-edge neighborhood corona. The adjacency spectra (Laplacian and signless Laplacian) of these operations have been calculated in  \cite{Brouwer,CDS,CRS}.  So many infinite families of pairs of adjacency, Laplacian and signless Laplacian Isospectral graphs are generated using graph operation, see for example \cite{WZ,RF,LZ,LZ1,MM,BPS}. Before going further let have some basic definitions which come from \cite{Fharray,YW}.  Let $G_1$ and $G_2$ be two graphs on disjoint sets of $n_1$ and $n_2$ vertices with $m_1$ and $m_2$ edges respectively. The \textit{corona $G_1\circ G_2$} of $G_1$ and $G_2$ is defined as the graph obtained by taking one copy of $G_1$ and $n_1$ copies of $G_2$, then joining the $i^{th}$ vertex of $G_1$ to every vertex in the $i^{th}$ copy of $G_2$, see Figure(\ref{figure1}). So in that way $G_1\circ G_2$ has $n_1(n_2+1)$ vertices with $m_1+n_1(m_2+n_2)$ edges. Let $G_1$ and $G_2$ be two graphs on disjoint sets of $n_1$ and $n_2$ vertices, $m_1$ and $m_2$ edges, respectively. The \textit{edge coronal} $G_1 \lozenge G_2$ of two graphs is defined by taking one copy of $G_1$ and $m_1$ copies of $G_2$ and then joining two end vertices of the $i^{th}$ edge of $G_1$ to the every vertex in the $i^{th}$ copy of $G_2$, see Figure(\ref{figure1}). So that the edge corona $G_1 \lozenge G_2$ of $G_1$ and $G_2$ has $n_1 + m_1n_2$ vertices and $m_1 + 2m_1n_2 + m_1m_2$ edges.
\begin{figure}[h!]
	
	\includegraphics[width=60mm,scale=0.6]{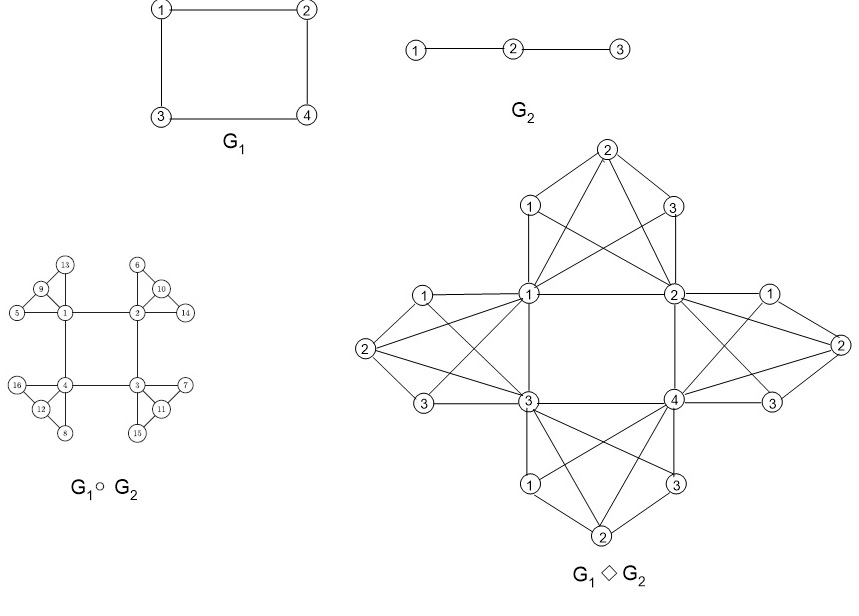}
	\centering
	\caption{ vertex corona and edge corona of $G_1=C_4$ and $G_2=P_3$.}
	\label{figure1}
\end{figure}\\
Similar to above we considered an another family of graph defined in \cite{LZhou}. For a graph $G$, Let $R(G)$ be the graph obtained from the graph by adding a vertex $v_e$ and joining $v_e$ to the end vertices of $e$ for each $e\in E(G)$. The graph $R(G)$ appeared in \cite{CDS} and named as the $R$-graph of $G$. In other words $R(G)$ is just the edge corona of $G$ and a singleton graph. Let $I(G)$ be the set of newly added vertices, i.e. $I(G)=V(R(G))\setminus V(G)$. Let $G_1$ and $G_2$ be two vertex disjoint graphs with $n_1$, $n_2$ vertices and $m_1$ and $m_2$ edges respectively. The $R$-\textit{vertexcorona} of $G_1$ and $G_2$, denoted by $G_1\odot G_2$, is the graph obtained from vertex disjoint $R(G_1)$ and $n_1$ copies of $G_2$ by joining the $i^{th}$ vertex of $V(G_1)$ to every vertex in the $i^{th}$ copy of $G_2$, see Figure(\ref{figure2}). So we have $n_1(1+n_2)+m_1$ vertices in $G_1\odot G_2$ with $3m_1+n_1(m_2+n_2)$ edges. The $R$-\textit{edgecorona} of $G_1$ and $G_2$, denoted by  $G_1\circleddash G_2$ is the graph obtained from vertex disjoint $R(G_1)$ and $|I(G_1)|$ copies of $G_2$ by joining the $i^{th}$ vertex of $I(G_1)$ to every vertex in the $i^{th}$ copy of $G_2$, see Figure(\ref{figure2}). Thus $G_1\circleddash G_2$ has $n_1+m_1(1+n_2)$ vertices with $m_1(3+m_2+n_2)$ edges. \begin{figure}[h!]
	\includegraphics[width=70mm,scale=0.7]{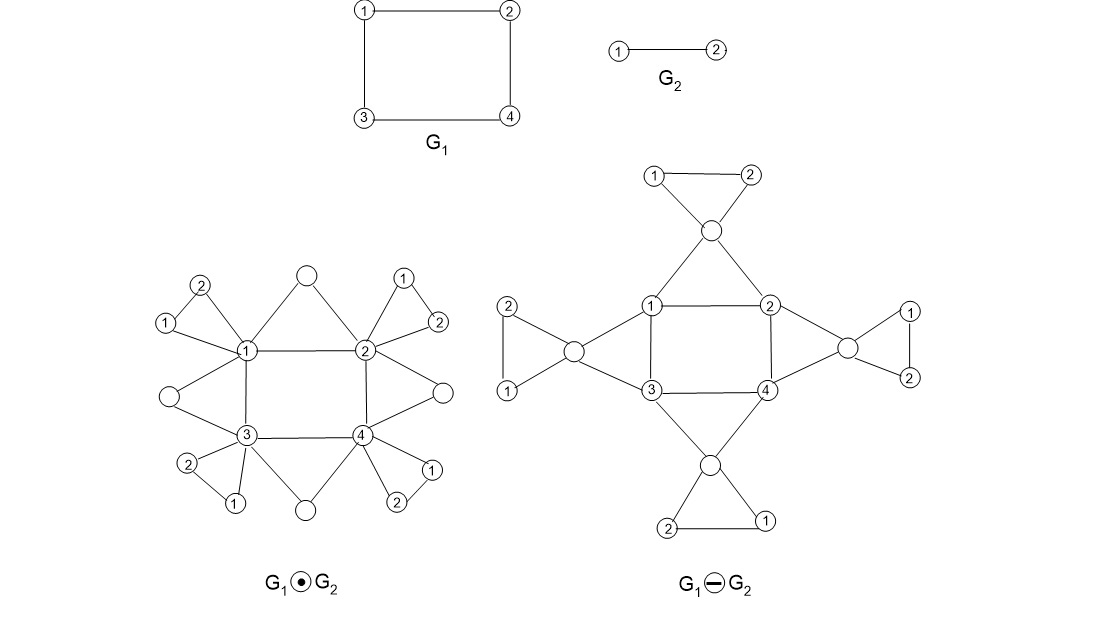}
	\centering
	\caption{ $R$-\textit{vertexcorona} and  $R$-\textit{edgecorona} of $G_1=C_4$ and $G_2=K_2$. }
	\label{figure2}
\end{figure}\\
Motivated by researcher, we discuss the spectrum of above mention graphs using a new family of matrices introduced in \cite{Vildo} which is in fact the convex combination of $A(G)$ and $D(G)$ defined by
\begin{equation}\label{mainequation}
A_\alpha(G)=\alpha D(G)+(1-\alpha)A(G), 0\leq \alpha \leq 1.
\end{equation}
So for $\alpha=0$ and $\alpha=\frac{1}{2}$ the equation(\ref{mainequation}) is similar to adjacency and signless Laplacian matrices of the graph. Let $M$ be an $n\times n$ matrix then $f_M(\theta)=(\theta I-M)$ is the characteristic polynomial  of $M$, where $\sigma(M)$ will be considered as set of eigenvalues of matrix $M$. For any graph $G$, $f_{A_\alpha}(\nu)$ is the $A_\alpha$ characteristic polynomial of graph $G$. Denote $\nu_1\geq ...\geq \nu_n$ be the $\alpha$-eigenvalues of the graph.  The rest of the paper is organized as fallows. In Section $2$, we provided some Preliminaries. In Section $3$, we computed the $A_\alpha$ characteristic polynomial of  $G_1\circ G_2$ when $G_1$ is any graph and $G_2$ is a regular(or complete bipartite) graph. In Section $4$, we computed the $A_\alpha$ characteristic polynomials of $ G_1\lozenge G_2$ for a regular graph $G_1$ and any arbitrary graph $G_2$. In Section $5$, and Section $6$ we computed the $A_\alpha$ characteristic polynomials of  of $G_1 \odot G_2$ and $G_1\circleddash G_2$. In last we discussed, as an application of these results and mentioned a pairs of non isomorphic $\alpha$-Isospectral graphs through which we can generate infinitely many family of $\alpha$-Isospectral graphs.
\section{Preliminaries}
The symbol $\mathbf{0_n}, \mathbf{j_n}$ (resp,$\mathbf{0_{mn}}$ and $\mathbf{J_{mn}}$) will stand for the column vector (resp, $m \times n$ matrices) consisting entirely of $0'$s and $1'$s. Also some standards notations $P_n,C_n$ and $K_{a,b}$ for the path, cycle on $n$ vertices  and complete bipartite graph on $n=a+b$ vertices. Let $G$ be a graph of order $n$ and $M$ be a graph of a matrix of $G$ the characteristic matrix $\theta I-M$ has determinant $det(\theta I-M)=f_M(\theta)\neq0$, so is invertible. The $M-$coronal of an $n\times n$ square matrix $M$ denoted by $\Gamma_M(\theta)$, defined in \cite{SG}, to be the sum of the entries of the matrix $(xI-M)^{-1}$, that is
\begin{equation} \label{coronal}
\Gamma_M(\theta)=\mathbf{j^T_n}(\theta I-M)^{-1} \mathbf{j_n}.
\end{equation}
It is known in \cite{SG} that if each row sum of $M$ equals to a constant $t$, then equation. (\ref{coronal}) has the form
\begin{equation}\label{coronal2}
\Gamma_M(\theta)=\frac{n}{\theta-t}.
\end{equation}
The vertex-egde incidence matrix $B(G)$ of a graph is the $(0,1)$-matrix with rows and columns indexed by the vertices and edges of $G$, respectively, such that $ue$-entry is $1$ if and only if vertex $u$ in incident with edge $e$. It is a known fact that $BB^T=Q(G)$ also their is an alternative way of writing equation(\ref{mainequation}) as
\begin{equation}\label{mainequation2}
Q(G)=\frac{1}{\beta}(A_\alpha(G)-(2\alpha-1)D(G)),
\end{equation} where $\beta=1-\alpha$ and $0\leq\alpha<1$. So we can write equation (\ref{mainequation2}) as $BB^T=\beta^{-1}(A_\alpha(G)-(2\alpha-1)D(G))$. We also need to recall some elementary results from Linear Algebra on the multiplication of Kronecker products and determinants of block matrices.
\begin{itemize}
	\item[i.] If $A$ is an $m \times n$ matrix and $B$ is a $p \times q$ matrix, then the Kronecker product $A \otimes B$ is the $mp \times nq$ block matrix.
	\item[ii.]  For any scalar $c$ we have $cA \otimes B= A\otimes cB =c(A \otimes B)$.
	\item[iii.] $(A \otimes B)^{-1}=A^{-1}\otimes B^{-1}$ and $det(A\otimes B)=det(A)^n det(B)^m$, if $A$ and $B$ are $n \times n$ and $m \times m$ matrices respectively.
	\item[iv.] If $A$ and $C$ are matrices of the same size, $B$ and $D$ are matrices of the same size, then $(A\otimes B)(C \otimes D)=(AC)\otimes (BD)$.
	\item[v.] If $D$ is invertible then $$det\begin{bmatrix}
	A & B\\
	C & D
	\end{bmatrix}= det(D)det(A-BD^{-1}C).$$
\end{itemize}
\section{$\alpha$-eigenvalue of $G_1 \circ G_2$}
Let $V(G_1)=\{u_1,...,u_{n_1}\}$ and $V(G_2)=\{v_1,...,v_{n_2}\}$ be two graphs on disjoint sets  of $n_1$ and $n_2$ vertices with $m_1$ and $m_2$ edges respectively. Then $G=G_1\circ G_2$ has a partition $V(G)=V(G_1)\cup V(G_2)$. Obviously the degrees of the vertices in $G=G_1\circ G_2$ are:
\begin{align*}
d_{G_1\circ G_2}(u_i) &=d_{G_1}(u_i)+n_2 for i=1,...,n_1,\\
d_{G_1 \circ G_2}(v_j) &= d_{G_2}(v_j)+1 for  j=1,...,n_2.\\
\end{align*}
Then $A_\alpha(G)$ can be written as.
$$A_\alpha(G)=\begin{bmatrix}
A_\alpha(G_1)+\alpha n_2I_{n_1} & \mathbf{j_{n_2}^T}\otimes \beta I_{n_1}  \\
\mathbf{j_{n_2}}\otimes \beta I_{n_1}& (A_\alpha(G_2)+\alpha I_{n_2}) \otimes I_{n_1}
\end{bmatrix},$$
where $\beta=1-\alpha$.
\begin{prop}\label{propcoronal1}
	Let $G_1$ and $G_2$ be two graphs on $n_1$ and $n_2$ vertices, respectively. Also let $\Gamma_{A_\alpha(G_2)}(\nu)$ be the $A_\alpha(G_2)$-coronal of $G_2$. Then the characteristic polynomial of $G= G_1 \circ G_2$ is $$ f_{A_\alpha(G)}(\nu)=( f_{A_\alpha(G_2)}(\nu-\alpha))^{n_1} \cdot f_{A_\alpha(G_1)}(\nu-\alpha n_2- \beta^2 \Gamma_{A_\alpha(G_2)}(\nu-\alpha) ),$$ In particularly, the $\alpha$-eigenvalue of $G$ is completely determined by the characteristics polynomial $f_{A_\alpha(G_1)}$ and$f_{A_\alpha(G_2)}$, and the $A_{\alpha(G_2)}$ of $G_2$.
\end{prop}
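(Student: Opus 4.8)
The plan is to start from the block form of $A_\alpha(G)$ displayed above and compute $f_{A_\alpha(G)}(\nu)=\det(\nu I-A_\alpha(G))$ by a Schur-complement expansion. Writing $\nu I-A_\alpha(G)$ as a $2\times2$ block matrix with blocks indexed by $V(G_1)$ and by the $n_1$ copies of $V(G_2)$, the bottom-right block is $D=\big((\nu-\alpha)I_{n_2}-A_\alpha(G_2)\big)\otimes I_{n_1}$, the top-left block is $A=(\nu-\alpha n_2)I_{n_1}-A_\alpha(G_1)$, and the two off-diagonal blocks are $-\mathbf{j_{n_2}^T}\otimes\beta I_{n_1}$ and $-\mathbf{j_{n_2}}\otimes\beta I_{n_1}$ (the placement of the $\beta I_{n_1}$ blocks being dictated by the chosen ordering of the copies of $G_2$, and the shifts $\alpha n_2 I_{n_1}$, $\alpha I_{n_2}$ coming from the degree identities $d_{G_1\circ G_2}(u_i)=d_{G_1}(u_i)+n_2$ and $d_{G_1\circ G_2}(v_j)=d_{G_2}(v_j)+1$). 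Since $D$ is invertible for all but finitely many $\nu$ (precisely when $\nu-\alpha\notin\sigma(A_\alpha(G_2))$), item (v) of the Preliminaries gives $\det(\nu I-A_\alpha(G))=\det(D)\,\det(A-BD^{-1}C)$ on that cofinite set; the identity then extends to all $\nu$ because both sides are polynomials in $\nu$, the factor $f_{A_\alpha(G_2)}(\nu-\alpha)$ clearing the denominator of the rational function $\Gamma_{A_\alpha(G_2)}$.

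Next I would evaluate the two determinants. By item (iii), $\det(D)=\det\big((\nu-\alpha)I_{n_2}-A_\alpha(G_2)\big)^{n_1}\det(I_{n_1})^{n_2}=\big(f_{A_\alpha(G_2)}(\nu-\alpha)\big)^{n_1}$ and $D^{-1}=\big((\nu-\alpha)I_{n_2}-A_\alpha(G_2)\big)^{-1}\otimes I_{n_1}$. Applying the mixed-product rule (iv) twice,
\[
BD^{-1}C=\Big(\mathbf{j_{n_2}^T}\big((\nu-\alpha)I_{n_2}-A_\alpha(G_2)\big)^{-1}\mathbf{j_{n_2}}\Big)\otimes\big(\beta^2 I_{n_1}\big)=\beta^2\,\Gamma_{A_\alpha(G_2)}(\nu-\alpha)\,I_{n_1},
\]
where the $1\times1$ factor in the first term is exactly the coronal $\Gamma_{A_\alpha(G_2)}(\nu-\alpha)$ by definition~(\ref{coronal}) with $\theta=\nu-\alpha$. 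Hence $A-BD^{-1}C=\big(\nu-\alpha n_2-\beta^2\Gamma_{A_\alpha(G_2)}(\nu-\alpha)\big)I_{n_1}-A_\alpha(G_1)$, whose determinant is $f_{A_\alpha(G_1)}\big(\nu-\alpha n_2-\beta^2\Gamma_{A_\alpha(G_2)}(\nu-\alpha)\big)$. Multiplying the two determinants yields the claimed formula, and the ``in particular'' assertion is then immediate, since every quantity on the right-hand side is determined by $f_{A_\alpha(G_1)}$, $f_{A_\alpha(G_2)}$ and $\Gamma_{A_\alpha(G_2)}$.

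The only genuinely non-mechanical points are the correct identification of the block structure of $A_\alpha(G)$ (the degree shifts and the exact Kronecker form of the off-diagonal blocks) and the brief polynomial-continuity argument that removes the invertibility hypothesis on $D$; I expect that bookkeeping to be the main, though modest, obstacle. Once the block matrix is pinned down, the remainder is a direct application of items (iii)--(v) together with definition~(\ref{coronal}).
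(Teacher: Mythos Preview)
Your proposal is correct and follows essentially the same route as the paper: write $\nu I - A_\alpha(G)$ in $2\times 2$ block form, apply the Schur-complement formula (item~(v)) with respect to the $((\nu-\alpha)I_{n_2}-A_\alpha(G_2))\otimes I_{n_1}$ block, and then simplify $BD^{-1}C$ via the Kronecker identities to recognize the coronal. Your version is in fact slightly more careful than the paper's, since you make explicit the polynomial-continuity step that removes the invertibility hypothesis on $D$.
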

\begin{proof}
	The characteristic polynomial of $A_\alpha(G)$ can be calculated as fallows,
	$$f_{A_\alpha(G)}(\nu)=det(\nu I_{n_1(n_2+1)}-A_\alpha(G)).$$ Which implies
	\begin{align*}
	& =det\begin{bmatrix}
	\nu I_{n_1}- (A_\alpha(G_1)+\alpha n_2 I_{n_1}) &- \mathbf{j_{n_2}^T} \otimes \beta I_{n_1} \\
	- \mathbf{j_{n_2}} \otimes \beta I_{n_1}& \nu I_{n_1n_2} - (A_\alpha(G_2)+\alpha I_{n_2})\otimes I_{n_1}
	\end{bmatrix},\\
	& =det\begin{bmatrix}
	(\nu-\alpha n_2)  I_{n_1}- A_\alpha(G_1) &- \mathbf{j_{n_2}^T} \otimes \beta I_{n_1} \\
	- \mathbf{j_{n_2}} \otimes \beta I_{n_1}& ((\nu-\alpha)I_{n_2} - A_\alpha(G_2))\otimes I_{n_1}
	\end{bmatrix},\\
	& =det((\nu-\alpha)I_{n_2} - A_\alpha(G_2))\cdot det(B).
	\end{align*}	
	Where
	$$B=(\nu-\alpha n_2)  I_{n_1}- A_\alpha(G_1)-\beta^2 (\mathbf{j_{n_2}^T} \otimes I_{n_1})((\nu-\alpha)I_{n_2} - A_\alpha(G_2))^{-1}(\mathbf{j_{n_2}} \otimes I_{n_1}),$$ and
	$$det(B)=det\biggl((\nu-\alpha n_2)  I_{n_1}- A_\alpha(G_1)-\beta^2 (\mathbf{j_{n_2}^T} ((\nu-\alpha)I_{n_2} - A_\alpha(G_2))^{-1}\mathbf{j_{n_2}}) \otimes I_{n_1})\biggr).$$
	Using equation (\ref{coronal}) we get
	\begin{align*}
	det(B)&=det((\nu-\alpha n_2 -\beta^2 \Gamma_{A_\alpha(G_2)}(\nu-\alpha)) I_{n_1}- A_\alpha(G_1)),\\
	&=f_{A_\alpha(G_1)}(\nu -\alpha n_2 -\beta^2 \Gamma_{A_\alpha(G_2)}(\nu-\alpha)).
	\end{align*}	
	Hence the characteristic polynomial of $A_\alpha(G)$ is $$f_{A_\alpha(G)}(\nu)=( f_{A_\alpha(G_2)}(\nu-\alpha))^{n_1} \cdot f_{A_\alpha(G_1)}(\nu-\alpha n_2- \beta^2 \Gamma_{A_\alpha(G_2)}(\nu-\alpha) ),$$
	This completes the proof.
\end{proof}
Following Theorem is an easy consequence of Proposition(\ref{propcoronal1}) and equation(\ref{coronal2}) when $G_2$ is $k$-regular.
\begin{theorem}\label{theoremcoronal}
	Let $G_1$ be any graph on $n_1$ vertices and $G_2$ be a $k$-regular graph on $n_2$ vertices. Suppose $\sigma(G_1)=\{\nu_1, \nu_2,...,\nu_n\}$ and $\sigma(G_2)=\{k=\eta_1,\eta_2,...,\eta_n\}$ are the $\alpha$-eigenvalues of $G_1$ and $G_2$. Then the $\alpha$-eigenvalue of $G=G_1\circ G_2$ is given by:
	\begin{itemize}
		\item[i.] Two multiplicity- one eigenvalues $$\frac{\nu_i+k+\alpha(n_2+1)\pm\sqrt{\alpha^2 n_2^2+(k+\alpha-\nu_i)^2+2n_2(2+\alpha(\nu_i+\alpha-k-4))}}{2},$$ for each eigenvalue $\nu_i (i=1,...,n)$ of $A_\alpha(G_1)$.
		\item[ii.] The $\alpha$-eigenvalue $\eta_j+1$ with multiplicity $n_1$ for every non maximal $\alpha$-eigenvalue $\eta_j$ $(j=1,...,n_2-1)$.
	\end{itemize}
\end{theorem}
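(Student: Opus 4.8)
The plan is to specialize Proposition~\ref{propcoronal1} to the case where $G_2$ is $k$-regular, for then the $A_\alpha(G_2)$-coronal is completely explicit. \textbf{Step 1 (evaluating the coronal).} Since $G_2$ is $k$-regular, $A_\alpha(G_2)=\alpha kI_{n_2}+(1-\alpha)A(G_2)$ has every row sum equal to $\alpha k+(1-\alpha)k=k$, so the constant-row-sum identity~(\ref{coronal2}) gives $\Gamma_{A_\alpha(G_2)}(\nu-\alpha)=\dfrac{n_2}{\nu-\alpha-k}$. Feeding this into Proposition~\ref{propcoronal1} turns the characteristic polynomial of $G$ into
$$f_{A_\alpha(G)}(\nu)=\bigl(f_{A_\alpha(G_2)}(\nu-\alpha)\bigr)^{n_1}\cdot f_{A_\alpha(G_1)}\!\Bigl(\nu-\alpha n_2-\frac{\beta^2 n_2}{\nu-\alpha-k}\Bigr).$$

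\textbf{Step 2 (clearing the rational factor).} Writing $f_{A_\alpha(G_1)}(\theta)=\prod_{i=1}^{n_1}(\theta-\nu_i)$ and multiplying the $i$-th factor of $f_{A_\alpha(G_1)}(\nu-\alpha n_2-\beta^2 n_2/(\nu-\alpha-k))$ through by $(\nu-\alpha-k)$, the second factor above becomes $(\nu-\alpha-k)^{-n_1}\prod_{i=1}^{n_1}\bigl[(\nu-\alpha n_2-\nu_i)(\nu-\alpha-k)-\beta^2 n_2\bigr]$, whereas $\bigl(f_{A_\alpha(G_2)}(\nu-\alpha)\bigr)^{n_1}$ carries exactly the power $(\nu-\alpha-k)^{n_1}$ contributed by the Perron eigenvalue $\eta_1=k$ of $G_2$. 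These powers cancel, leaving the polynomial identity
$$f_{A_\alpha(G)}(\nu)=\prod_{j=2}^{n_2}(\nu-\alpha-\eta_j)^{n_1}\cdot\prod_{i=1}^{n_1}\bigl[(\nu-\alpha n_2-\nu_i)(\nu-\alpha-k)-\beta^2 n_2\bigr].$$
The first product gives item (ii): every non-maximal $\alpha$-eigenvalue $\eta_j$ ($j=2,\dots,n_2$) of $G_2$ produces a root $\nu=\eta_j+\alpha$ of multiplicity $n_1$.

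\textbf{Step 3 (the quadratics).} For item (i), each bracket in the last product is a monic quadratic in $\nu$ with linear coefficient $-(\nu_i+k+\alpha(n_2+1))$ and constant term $(\nu_i+\alpha n_2)(\alpha+k)-\beta^2 n_2$, so its two roots are furnished by the quadratic formula. The only genuine computation is simplifying the discriminant: the part depending on $\nu_i$ collapses to $(k+\alpha-\nu_i)^2$, the term $\alpha^2 n_2^2$ survives from squaring the linear coefficient, and, after putting $\beta=1-\alpha$, the remaining $n_2$-linear terms regroup into $2n_2\bigl(2+\alpha(\nu_i+\alpha-k-4)\bigr)$, reassembling into the radicand of the statement. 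A dimension count closes the argument: the $2n_1$ roots of these quadratics plus the $n_1(n_2-1)$ roots of item (ii) exhaust the $n_1(n_2+1)$ $\alpha$-eigenvalues of $G$.

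\textbf{Main obstacle.} The argument is essentially bookkeeping on top of Proposition~\ref{propcoronal1}; the step that must be executed carefully is the exact cancellation of $(\nu-\alpha-k)^{n_1}$ between its two halves, since it is precisely this cancellation that restricts the multiplicity-$n_1$ family in (ii) to the \emph{non-maximal} $\alpha$-eigenvalues of $G_2$, and afterward one must push the discriminant reduction in Step~3 through without sign slips.
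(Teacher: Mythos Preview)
Your proof is correct and follows essentially the same route as the paper's: compute the coronal via~(\ref{coronal2}), feed it into Proposition~\ref{propcoronal1}, and read off the eigenvalues from the resulting quadratic and the residual factor. You are in fact more careful than the paper on two points: you make explicit the cancellation of $(\nu-\alpha-k)^{n_1}$ between the two factors (the paper simply asserts the remaining eigenvalues without displaying this), and you actually carry out the discriminant simplification to match the closed form in part~(i), whereas the paper stops at the quadratic. One small remark: your Step~2 correctly yields roots $\nu=\eta_j+\alpha$, while the statement and the paper's proof both write $\eta_j+1$; the shift in Proposition~\ref{propcoronal1} is by $\alpha$, so your version is the right one and the ``$+1$'' appears to be a typo in the paper.
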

\begin{proof}
	Since $G_2$ is $k$-regular and so each row sum is $k$ using equation (\ref{coronal2}) that is, $$\Gamma_{A_\alpha(G_2)}(\eta-\alpha)=\frac{n_2}{(\eta-\alpha)-k},$$ $\eta= k+\alpha$ is the only pole in above equation. Which is equivalent to the maximal $\alpha$-eigenvalue $\eta-\alpha=k$ of $G_2$. Using Proposition(\ref{propcoronal1}), the $2n_1$  $\alpha$-eigenvalues are obtained by solving $$\eta-\alpha n_2-\beta^2\frac{n_2}{\eta-\alpha-k}=\nu_i.$$ This equation is quadratic in $\eta$, that is, $$\eta^2-(\alpha(n_2+1)+k+\nu_i)\eta+\alpha n_2(\alpha+k+\nu_i)-\beta^2 n_2+k\nu_i=0,$$ for each $\alpha$-eigenvalue $\nu_i$ of $A_\alpha(G_1)$. \\ Similarly the other $\eta_1(\eta_2-1)$ $\alpha$-eigenvalues are $\eta_j+1$ with multiplicity $n_1$ for every non maximum $\alpha$-eigenvalues $\eta_j$ , $j=1,...,n_2-1$.
\end{proof}
Now we will consider the $\alpha$-eigenvalue of $G=G_1\circ G_2$ when $G_1$ is any graph and $G_2=K_{a,b}$. If $a=b$, then $G_2$ is $a$-regular graph, which has been exhibited in Theorem(\ref{theoremcoronal}). It is always assumed that $a\neq b$ in the following Theorem.
\begin{theorem}\label{coronalofcomplete}
	Let $G_1$ be any graph on $n_1$ vertices and $G_2=K_{a,b}$ on $n_2=a+b$ vertices. Suppose that $\sigma(G_1)=\{\nu_1, \nu_2, ...,\nu_n\}$. Then the $\alpha$-eigenvalue of $G=G_1\circ G_2$ consists preciely of:
	\begin{itemize}
		\item[i.] The $\alpha$-eigenvalue $\alpha(a+1)$ with multiplicity $n_1(b-1)$.
		\item[ii.] The $\alpha$-eigenvalue $\alpha(b+1)$ with multiplicity $n_1(a-1)$.
		\item[iii.] For each $\alpha$-eigenvalue $\nu_i$ ($i=1,2,...,n_1$) of $A_\alpha(G)$, $\eta_{i1}+\alpha, \eta_{i2}+\alpha, \eta_{i2}+\alpha$ are three $\alpha$-eigenvalue of $G$ where $\eta_{i1}, \eta_{i2}, \eta_{i3}$ are the three roots of polynomial
		\begin{multline*}
		\eta^3+\eta ^2(-\nu _i-2 \alpha  n_2)+\eta (a (2 \alpha -1) b+n_2(\alpha (\nu _i+\alpha  n_2)-1))-2ab\\+\alpha  n_2 (a (1-2 \alpha ) b+n_2)+a (1-2 \alpha ) b \nu _i=0.
		\end{multline*}
	\end{itemize}
\end{theorem}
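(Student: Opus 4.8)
The plan is to specialise Proposition~\ref{propcoronal1} to $G_2=K_{a,b}$. That formula requires two pieces of data about $G_2$: its $A_\alpha$-characteristic polynomial $f_{A_\alpha(K_{a,b})}$ and its coronal $\Gamma_{A_\alpha(K_{a,b})}$. Because $a\neq b$, the graph $K_{a,b}$ is not regular, so $A_\alpha(K_{a,b})$ has no constant row sum and (\ref{coronal2}) cannot be invoked; both quantities must be computed directly.

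For the characteristic polynomial I would split $V(K_{a,b})$ into its colour classes of sizes $a$ and $b$, so that
$$A_\alpha(K_{a,b})=\begin{bmatrix}\alpha b I_a & \beta\mathbf{J_{ab}}\\ \beta\mathbf{J_{ba}} & \alpha a I_b\end{bmatrix},\qquad \beta=1-\alpha.$$
Applying the block-determinant identity listed in Section~2 with $D=(\theta-\alpha a)I_b$, then using $\mathbf{J_{ab}}\mathbf{J_{ba}}=b\,\mathbf{J_{aa}}$ together with the fact that $\mathbf{J_{aa}}$ has eigenvalue $a$ once and $0$ with multiplicity $a-1$, yields
$$f_{A_\alpha(K_{a,b})}(\theta)=(\theta-\alpha a)^{b-1}(\theta-\alpha b)^{a-1}\,Q(\theta),\qquad Q(\theta):=\theta^{2}-\alpha n_2\theta+(2\alpha-1)ab,$$
where $n_2=a+b$. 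For the coronal, the automorphisms of $K_{a,b}$ preserving each colour class force $(\theta I-A_\alpha(K_{a,b}))^{-1}\mathbf{j_{n_2}}$ to be constant on each class, say equal to $x_1$ on the $a$-part and $x_2$ on the $b$-part; then $\Gamma_{A_\alpha(K_{a,b})}(\theta)=ax_1+bx_2$, where $(x_1,x_2)$ solves the $2\times2$ system $(\theta-\alpha b)x_1-\beta b x_2=1$ and $-\beta a x_1+(\theta-\alpha a)x_2=1$, whose determinant is exactly $Q(\theta)$. Solving and collecting terms gives
$$\Gamma_{A_\alpha(K_{a,b})}(\theta)=\frac{n_2\theta+2ab-\alpha n_2^{2}}{Q(\theta)},$$
which degenerates to $n_2/(\theta-a)$ when $a=b$, consistently with Theorem~\ref{theoremcoronal}.

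Substituting both expressions into Proposition~\ref{propcoronal1} and writing $f_{A_\alpha(G_1)}(z)=\prod_{i=1}^{n_1}(z-\nu_i)$ gives
$$f_{A_\alpha(G)}(\nu)=\bigl(f_{A_\alpha(K_{a,b})}(\nu-\alpha)\bigr)^{n_1}\prod_{i=1}^{n_1}\Bigl(\nu-\alpha n_2-\beta^{2}\frac{n_2(\nu-\alpha)+2ab-\alpha n_2^{2}}{Q(\nu-\alpha)}-\nu_i\Bigr).$$
Clearing the denominator $Q(\nu-\alpha)$ inside each of the $n_1$ factors of the product creates a factor $Q(\nu-\alpha)^{n_1}$ downstairs, which cancels against the $Q(\nu-\alpha)^{n_1}$ sitting inside $\bigl(f_{A_\alpha(K_{a,b})}(\nu-\alpha)\bigr)^{n_1}$; what remains is
$$\bigl(\nu-\alpha(a+1)\bigr)^{n_1(b-1)}\bigl(\nu-\alpha(b+1)\bigr)^{n_1(a-1)}\prod_{i=1}^{n_1}\Bigl[(\nu-\alpha n_2-\nu_i)Q(\nu-\alpha)-\beta^{2}\bigl(n_2(\nu-\alpha)+2ab-\alpha n_2^{2}\bigr)\Bigr].$$
The first two factors give parts (i) and (ii). For part (iii), in the $i$-th cubic factor I would substitute $\nu=\eta+\alpha$ and expand; this produces a monic cubic in $\eta$ whose coefficients, after simplification, are those displayed in the statement, and whose three roots $\eta_{i1},\eta_{i2},\eta_{i3}$ yield the $\alpha$-eigenvalues $\eta_{ij}+\alpha$ of $G$ attached to $\nu_i$. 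A concluding sanity check is the degree count $n_1(b-1)+n_1(a-1)+3n_1=n_1(a+b+1)=n_1(n_2+1)=|V(G)|$.

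The bulk of this is routine: one $2\times2$ block-determinant evaluation, one $2\times2$ linear solve, and the expansion of a single cubic. The one delicate point is the cancellation of $Q(\nu-\alpha)^{n_1}$ between the two pieces of Proposition~\ref{propcoronal1}'s formula — that is, verifying that the same quadratic $Q(\theta)=\theta^{2}-\alpha n_2\theta+(2\alpha-1)ab$ serves simultaneously as the quadratic factor of $f_{A_\alpha(K_{a,b})}$ and as the denominator of $\Gamma_{A_\alpha(K_{a,b})}$. This is precisely what makes the right-hand side a genuine polynomial and pins down the multiplicities in (i)--(iii); everything else is bookkeeping.
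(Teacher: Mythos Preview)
Your proposal is correct and follows essentially the same route as the paper: compute $\Gamma_{A_\alpha(K_{a,b})}$ explicitly, feed it together with the $A_\alpha$-spectrum of $K_{a,b}$ into Proposition~\ref{propcoronal1}, and read off parts (i)--(iii). The only cosmetic differences are that the paper obtains the coronal via the identity $(\eta I-A_\alpha(G_2))X\mathbf{j_{n_2}}=Q(\eta)\mathbf{j_{n_2}}$ for a suitable diagonal $X$ (rather than your $2\times2$ linear solve) and quotes the spectrum of $K_{a,b}$ rather than deriving $f_{A_\alpha(K_{a,b})}$; your explicit tracking of the cancellation of $Q(\nu-\alpha)^{n_1}$ is in fact cleaner than the paper's somewhat informal handling of the same point.
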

\begin{proof}
	We first compute the $A_\alpha(G_2)$-coronal of $G_2=K_{a,b}$, Let $$A_\alpha(G_2)=\begin{bmatrix}
	\alpha b I_a& \beta \mathbf{J}\\
	\beta \mathbf{J} & \alpha a I_b
	\end{bmatrix}.$$Consider the diagonal matrix $$X=\begin{bmatrix}
	(\eta -\alpha a+ \beta b) I_a&  \mathbf{0}\\
	\mathbf{0} & (\eta- \alpha b + \beta a )I_b
	\end{bmatrix}$$
	By a simple calculation we get $(\eta I-A_{\alpha}(G_2))X\mathbf{j_{n_2}}=\eta^2-\alpha(a+b)\eta+(2\alpha-1)ab\mathbf{j_{n_2}}$. Hence $$\Gamma_{A_{\alpha}(G_2)}(\eta)=\mathbf{j_{n_2}^T}(\eta I-A_{\alpha}(G_2))\mathbf{j_{n_2}}=\frac{ \eta(a+b)-\alpha(a^2+b^2)+2(1-\alpha)ab}{\eta^2-\alpha(a+b)\eta+(2\alpha-1)ab},$$ $$=\frac{ \eta n_2-\alpha n_2^2+2ab}{\eta^2-\alpha n_2 \eta+(2\alpha-1)ab}.$$ As the $\alpha$-eigenvalue of $K_{a,b}$ is $\sigma(K_{a,b})=\{\frac{\alpha n+\sqrt{\alpha^2n^2+4ab(1-2\alpha)}}{2},[\alpha a]^{b-1}$,\\$[\alpha b]^{a-1},\frac{\alpha n-\sqrt{\alpha^2n^2+4ab(1-2\alpha)}}{2}\}$. Where the two poles of $\Gamma_{A_{\alpha}(G_2)}(\eta-\alpha)$ are $\eta=\alpha + \frac{\alpha n+\sqrt{\alpha^2n^2+4ab(1-2\alpha)}}{2}$ and $\eta = \alpha+\frac{\alpha n-\sqrt{\alpha^2n^2+4ab(1-2\alpha)}}{2}$. Using Proposition(\ref{propcoronal1}), the $\alpha$-eigenvalue of $G$ is given as $\alpha (a+1), \alpha(b+1)$ with multiplicity $n_1(b-1)$ and $n_1(a-1)$ respectively. While the other $3n_1$ eigenvalues are obtained by solving: $$\eta - \alpha n_2 - \Gamma_{A_{\alpha}(G_2)}(\eta-\alpha)=\nu_i,$$ that is\\ $$\eta - \alpha n_2- \frac{(\eta-\alpha)n_2-\alpha n_2^2+ 2 ab}{(\eta-\alpha)^2-\alpha n_2(\eta-\alpha)+(2\alpha-1)ab}=\nu_i.$$ \\ This equation is of degree three in $\eta$, that is,
	\begin{multline*}
	2 a b (1 - 2 \alpha) + (a^2 + b^2) \alpha+ n_2 \alpha -n_2 \alpha^3 - n^2 \alpha^3 - a b n_2 \alpha (-1 +2 \alpha) + \eta^3 + \eta^2 (-2 \alpha \\ -2 n_2 \alpha- \nu_i) - \alpha^2 \nu_i + n \alpha^2 \nu_i +a b (-1 + 2 \alpha) \nu_i + \eta (-n_2 + \alpha^2 + 3 n_2 \alpha^2 + n^2 \alpha^2 \\+ a b (-1 + 2 \alpha) +2 \alpha \nu_i - n_2 \alpha \nu_i)=0.
	\end{multline*}
	Simplifying above will lead us to,
	\begin{multline*}
	a^2 \alpha + \eta^3 -a b (-1 + 2 \alpha) (2 +n_2 \alpha - \nu_i) + \eta^2 (-2 (1 +n) \alpha - \nu_i) + \alpha (b^2 + n - n \alpha^2 -n_2^2 \alpha^2 \\- \alpha \nu_i +n_2 \alpha \nu_i) + \eta (n^2 \alpha^2 + a b (-1 + 2 \alpha) + \alpha (\alpha + 2 \nu_i) + n_2 (-1 + 3 \alpha^2 - \alpha \nu_i))=0.
	\end{multline*}
	
\end{proof}
As an application of the Theorem(\ref{coronalofcomplete}), we constructed pairs of $\alpha$-eigenvalue of graphs. By Proposition (\ref{propcoronal1}), the $\alpha$-eigenvalues of $G_1 \circ G_2$ is completely determined by the characteristic polynomials $f_{A_\alpha(G_1)}$ and $f_{A_\alpha(G_2)}$ and the $A_\alpha(G_2)$-coronal of $G_2$. Which leads us to following result.
\begin{cor}\label{corr1}
	Let $G_1$ and $G_2$ are two non-isomorphic $\alpha$-Isospectral graph, and $H$ be any graph. Then,
	\begin{itemize}
		\item[i.] $G_1 \circ H$ and $G_2 \circ H$ are non-isomorphic $\alpha$-Isospectral graphs.
		\item[ii.] $H \circ G_1$ and $H  \circ G_2 $ are non-isomorphic $\alpha$-Isospectral graphs whenever $\Gamma_{A_{\alpha}(G_1)}=\Gamma_{A_{\alpha}(G_2)}$.
	\end{itemize}
\end{cor}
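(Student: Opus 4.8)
The plan is to exploit the structure established in Proposition~\ref{propcoronal1}, which expresses
$f_{A_\alpha(G_1\circ G_2)}(\nu)$ entirely in terms of three ingredients: $f_{A_\alpha(G_1)}$, $f_{A_\alpha(G_2)}$, and the coronal $\Gamma_{A_\alpha(G_2)}$. For part (i), suppose $G_1$ and $G_2$ are $\alpha$-isospectral, so $f_{A_\alpha(G_1)}=f_{A_\alpha(G_2)}$ as polynomials. Applying Proposition~\ref{propcoronal1} with the roles relabelled (the outer graph is $G_1$ resp.\ $G_2$, the inner graph is $H$), we get
$$f_{A_\alpha(G_1\circ H)}(\nu)=\bigl(f_{A_\alpha(H)}(\nu-\alpha)\bigr)^{n}\cdot f_{A_\alpha(G_1)}\bigl(\nu-\alpha n_H-\beta^2\Gamma_{A_\alpha(H)}(\nu-\alpha)\bigr),$$
where $n=|V(G_1)|=|V(G_2)|$ (equal since the graphs are $\alpha$-isospectral, hence of the same order) and $n_H=|V(H)|$, and the same formula with $G_2$ in place of $G_1$. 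Since the first factor and the argument $\nu-\alpha n_H-\beta^2\Gamma_{A_\alpha(H)}(\nu-\alpha)$ do not involve $G_1$ or $G_2$, and $f_{A_\alpha(G_1)}=f_{A_\alpha(G_2)}$, the two characteristic polynomials coincide; thus $G_1\circ H$ and $G_2\circ H$ are $\alpha$-isospectral. Non-isomorphism follows because the corona operation $G\mapsto G\circ H$ is injective up to isomorphism on connected graphs: one can recover $G$ from $G\circ H$ (for instance, the vertices of the $G$-copy are distinguished by their degrees or by the pendant $H$-copies attached to each), so $G_1\circ H\cong G_2\circ H$ would force $G_1\cong G_2$, contrary to hypothesis.

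For part (ii), apply Proposition~\ref{propcoronal1} with outer graph $H$ and inner graph $G_1$ resp.\ $G_2$:
$$f_{A_\alpha(H\circ G_1)}(\nu)=\bigl(f_{A_\alpha(G_1)}(\nu-\alpha)\bigr)^{n_H}\cdot f_{A_\alpha(H)}\bigl(\nu-\alpha n-\beta^2\Gamma_{A_\alpha(G_1)}(\nu-\alpha)\bigr),$$
and likewise for $G_2$. Here the hypotheses $f_{A_\alpha(G_1)}=f_{A_\alpha(G_2)}$ and $\Gamma_{A_\alpha(G_1)}=\Gamma_{A_\alpha(G_2)}$ together make the right-hand side identical for $G_1$ and $G_2$ (the first hypothesis handles the first factor, and both hypotheses are needed to equate the second factor, since its argument contains the coronal). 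Hence $H\circ G_1$ and $H\circ G_2$ are $\alpha$-isospectral. Non-isomorphism again follows from the structural injectivity of corona: $H\circ G_1$ contains $n_H$ disjoint copies of $G_1$ hanging off the vertices of $H$, so $G_1$ is determined up to isomorphism by $H\circ G_1$, and $H\circ G_1\cong H\circ G_2$ would give $G_1\cong G_2$.

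The main obstacle — really the only non-formal point — is the non-isomorphism claim, i.e.\ verifying that the corona construction does not accidentally collapse two non-isomorphic inputs. The spectral equalities are immediate substitutions into Proposition~\ref{propcoronal1}. For the non-isomorphism I would argue via a degree/structure invariant: in $G\circ H$ the subset $V(G)$ is characterized as the set of $n$ vertices each of which, when deleted, disconnects a copy of $H$ (or, if $H$ is connected, one can use that each vertex of $V(G)$ is adjacent to all of a distinct copy of $H$ while no vertex of an $H$-copy is adjacent to two vertices of $V(G)$); restricting $G\circ H$ to $V(G)$ returns $G$. A cleaner route, if one prefers, is simply to invoke that $G\mapsto G\circ H$ is a well-known injective-up-to-isomorphism operation on connected graphs (this is standard and already implicitly used in the references \cite{LZ,LZ1} on corona isospectral constructions), so a short remark suffices rather than a full combinatorial proof.
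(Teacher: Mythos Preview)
Your proof is correct and follows exactly the paper's approach: the paper states the corollary as an immediate consequence of Proposition~\ref{propcoronal1} (noting that $f_{A_\alpha(G_1\circ G_2)}$ is completely determined by $f_{A_\alpha(G_1)}$, $f_{A_\alpha(G_2)}$, and $\Gamma_{A_\alpha(G_2)}$) and gives no further argument. In fact you go beyond the paper by explicitly justifying the non-isomorphism of the resulting coronae, a point the paper leaves entirely implicit.
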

\section{$\alpha$-eigenvalue of $G_1  \lozenge G_2$}
Let the vertex set of a graph $G_1$ and $G_2$ be $V(G_1) = \{u_1, u_2,...,u_{n_1}\}$ and $V(G_2) = \{v_1,v_2,...,v_{n_2}\}$, respectively. Where the edge set of graph $G_1$ is $E(G_1) = \{e_1,e_2,...,e_{m_1}\}$. The vertex-edge
\textit{incidence} matrix $B_1(G) = (b_{ue} )$ is an $n_1 \times m_1$ matrix with entry $b_{ue_j} = 1$ if the vertex $u$ is incident the edge $e_j$ and $0$ otherwise. Let $G_1$ be $k_1$ regular graph and $G_2$ be any graph. Then the graph $G= G_1 \lozenge G_2$ has the partition $V(G)=V(G_1)\cup V(G_2)$. The degree of the vertices in $G_1 \lozenge G_2$ are:
\begin{align*}
d_{G_1\lozenge G_2}(u_i) &=d_{G_1}(u_i)+k_1n_2 for i=1,...,n_1,\\
d_{G_1 \lozenge G_2}(v_j) &= d_{G_2}(v_j)+2 for  j=1,...,n_2.\\
\end{align*}
Then using equation (\ref{mainequation}), the $A_\alpha$ matrix of $G=G_1 \lozenge G_2$ is written as: $$A_\alpha(G)= \begin{bmatrix}
A_\alpha(G_1) +\alpha k_1n_2 I_{n_1}& B_1 \otimes \beta \mathbf{j_{n_2}^T}\\
B_1^T \otimes \beta \mathbf{j_{n_2}}& I_{m_1}\otimes (2\alpha I_{n_2}+A_\alpha(G_2))
\end{bmatrix}$$ where $\beta=1-\alpha$.
\begin{rem}\label{remark4}
	Using fact $BB^T=Q(G)$ and equation (\ref{mainequation2}). If a graph $G$ is $k$ regular then $BB^T=\beta^{-1}(A_\alpha(G)-(2\alpha-1)kI)$.
\end{rem}
\begin{theorem}\label{edgecoronal}
	Let $G_1$ be an $k_1$ regular graph $n_1$ vertices, $m_1$ edges and $G_2$ be any graph with $n_2$ vertices, $m_1$ edges. Also let $\Gamma_{A_\alpha(G_2)}(\nu)$ be the $A_\alpha(G_2)$-coronal of $G_2$. if $\nu$ is not a pole of $\Gamma_{A_\alpha(G_2)}(\nu-2\alpha)$.
	\begin{multline*}
	f_{A_\alpha(G)}(\nu)=(f_{A_\alpha(G_2)}(\nu-2\alpha))^{m_1}\cdot f_{A_\alpha(G_1)}  \left ( \frac{\nu-\alpha k_1 n_2+\beta k_1(2\alpha-1)}{1+\beta \Gamma_{A_\alpha(G_2)}(\nu-2\alpha)} \right )\cdot\\\left [ 1+\beta\Gamma_{A_\alpha(G_2)}(\nu-2\alpha) \right ]^{n_1}.
	\end{multline*}
	
\end{theorem}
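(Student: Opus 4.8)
The plan is to mirror the computation already carried out for $G_1 \circ G_2$ in Proposition~\ref{propcoronal1}, but now using the Schur complement with respect to the \emph{lower-right} block, which is the block-diagonal matrix $I_{m_1} \otimes \bigl((\nu - 2\alpha) I_{n_2} - A_\alpha(G_2)\bigr)$. First I would write $f_{A_\alpha(G)}(\nu) = \det\bigl(\nu I_{n_1 + m_1 n_2} - A_\alpha(G)\bigr)$ and pull the scalars off the diagonal, so the lower-right block becomes $I_{m_1} \otimes \bigl((\nu-2\alpha)I_{n_2} - A_\alpha(G_2)\bigr)$ and the upper-left block becomes $(\nu - \alpha k_1 n_2) I_{n_1} - A_\alpha(G_1)$. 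Applying item (v) of the Preliminaries, $\det = \det\bigl(I_{m_1}\otimes((\nu-2\alpha)I_{n_2}-A_\alpha(G_2))\bigr) \cdot \det(S)$, where the first factor is $\bigl(f_{A_\alpha(G_2)}(\nu-2\alpha)\bigr)^{m_1}$ by item (iii), and $S$ is the Schur complement
\[
S = \bigl((\nu-\alpha k_1 n_2)I_{n_1} - A_\alpha(G_1)\bigr) - \beta^2 \bigl(B_1\otimes \mathbf{j_{n_2}^T}\bigr)\Bigl(I_{m_1}\otimes\bigl((\nu-2\alpha)I_{n_2}-A_\alpha(G_2)\bigr)^{-1}\Bigr)\bigl(B_1^T\otimes \mathbf{j_{n_2}}\bigr).
\]

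Next I would simplify the correction term using the Kronecker mixed-product rule (item iv): it equals $\beta^2 \,(B_1 B_1^T)\otimes\bigl(\mathbf{j_{n_2}^T}((\nu-2\alpha)I_{n_2}-A_\alpha(G_2))^{-1}\mathbf{j_{n_2}}\bigr)$, and the scalar second factor is exactly $\Gamma_{A_\alpha(G_2)}(\nu-2\alpha)$ by definition~\eqref{coronal}. Since $G_1$ is $k_1$-regular, Remark~\ref{remark4} gives $B_1 B_1^T = Q(G_1) = \beta^{-1}\bigl(A_\alpha(G_1) - (2\alpha-1)k_1 I_{n_1}\bigr)$. Substituting, the correction term becomes $\beta\,\Gamma_{A_\alpha(G_2)}(\nu-2\alpha)\bigl(A_\alpha(G_1) - (2\alpha-1)k_1 I_{n_1}\bigr)$, so that
\[
S = \bigl(1 + \beta\,\Gamma_{A_\alpha(G_2)}(\nu-2\alpha)\bigr)\Bigl(\tfrac{\nu - \alpha k_1 n_2 + \beta k_1(2\alpha-1)}{1+\beta\,\Gamma_{A_\alpha(G_2)}(\nu-2\alpha)} I_{n_1} - A_\alpha(G_1)\Bigr),
\]
after collecting the $A_\alpha(G_1)$ coefficient and the scalar multiple of $I_{n_1}$. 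Taking determinants of this $n_1\times n_1$ matrix pulls out $\bigl(1+\beta\,\Gamma_{A_\alpha(G_2)}(\nu-2\alpha)\bigr)^{n_1}$ and leaves $f_{A_\alpha(G_1)}$ evaluated at the displayed argument, which combined with the $\bigl(f_{A_\alpha(G_2)}(\nu-2\alpha)\bigr)^{m_1}$ factor yields the claimed formula.

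The one genuinely delicate point is the algebra that turns $S$ into the factored form $\bigl(1 + \beta\Gamma\bigr)\bigl(\tfrac{\cdots}{1+\beta\Gamma} I - A_\alpha(G_1)\bigr)$: one must verify that the coefficient of $A_\alpha(G_1)$ in $S$ is precisely $-(1 + \beta\Gamma_{A_\alpha(G_2)}(\nu-2\alpha))$ (which it is, since $A_\alpha(G_1)$ enters once from the Schur diagonal and once, with coefficient $-\beta\Gamma$, from the substitution of $B_1 B_1^T$), and that the scalar part is $(\nu - \alpha k_1 n_2) - \beta\Gamma\cdot(-(2\alpha-1)k_1) = \nu - \alpha k_1 n_2 + \beta k_1(2\alpha-1)$ before dividing through. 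I would also note explicitly that the hypothesis "$\nu$ is not a pole of $\Gamma_{A_\alpha(G_2)}(\nu-2\alpha)$" is exactly what makes $(\nu-2\alpha)I_{n_2} - A_\alpha(G_2)$ invertible, so that item (v) applies; the remaining manipulations are then routine Kronecker-product bookkeeping.
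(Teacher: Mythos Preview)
Your proposal is correct and follows essentially the same route as the paper's own proof: both compute $\det(\nu I - A_\alpha(G))$ via the Schur complement with respect to the lower-right block $I_{m_1}\otimes((\nu-2\alpha)I_{n_2}-A_\alpha(G_2))$, collapse the correction term using the Kronecker mixed-product rule and the coronal definition~\eqref{coronal}, substitute $B_1B_1^T=\beta^{-1}(A_\alpha(G_1)-(2\alpha-1)k_1 I_{n_1})$ from Remark~\ref{remark4}, and then factor out $(1+\beta\Gamma_{A_\alpha(G_2)}(\nu-2\alpha))$ from the resulting $n_1\times n_1$ matrix. Your added remark that the non-pole hypothesis is precisely what guarantees invertibility of the lower-right block is a useful clarification the paper leaves implicit.
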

\begin{proof}The characteristic polynomial of $A_\alpha(G)$ can be calculated as fallows,
	$$f_{A_\alpha(G)}(\nu)=det(\nu I_{n_1+m_1n_2}-A_{\alpha}(G)).$$ So we have,
	\begin{align*}
	&= det \begin{bmatrix}
	\nu I_{n_1}-(A_\alpha(G_1) +\alpha k_1n_2 I_{n_1})& - B_1 \otimes \beta \mathbf{j_{n_2}^T}\\
	- B_1^T \otimes \beta \mathbf{j_{n_2}}& \nu I_{m_1n_2} - I_{m_1}\otimes (2\alpha I_{n_2}+A_\alpha(G_2))
	\end{bmatrix},\\
	&= det \begin{bmatrix}
	(\nu -\alpha k_1n_2)I_{n_1}-A_\alpha(G_1)& - B_1 \otimes \beta \mathbf{j_{n_2}^T}\\
	- B_1^T \otimes \beta \mathbf{j_{n_2}}& I_{m_1}\otimes ((\nu-2\alpha) I_{n_2}-A_\alpha(G_2))
	\end{bmatrix},\\
	&=det(I_{m_1}\otimes (\nu -2 \alpha)I_{n_2}-A_\alpha(G_2))\cdot det(S).
	\end{align*} where $S=((\nu-\alpha k_1 n_2)I_{n_1} -A_\alpha(G_1)-B_1B_1^T\otimes \beta^2(\mathbf{j^T_{n_2}}((\nu-2\alpha)I_{n_2}-A_\alpha(G_2))^{-1}\mathbf{j_{n_2}}))$.\\
	Using Remark(\ref{remark4}) and equation (\ref{coronal2}) we get, 	
	\begin{align*}
	S &=((\nu-\alpha k_1 n_2)I_{n_1} -A_\alpha(G_1)-(A_\alpha(G_1)-(2\alpha-1)I_{n_2}) \otimes \beta \Gamma_{A_{\alpha}(G_2)}(\nu -2\alpha)),\\
	&=((\nu-\alpha k_1 n_2+\beta(2\alpha-1) \Gamma_{A_{\alpha}(G_2)}(\nu -2\alpha))I_{n_1} -(1+ \beta \Gamma_{A_{\alpha}(G_2)}(\nu -2\alpha)A_\alpha(G_1)).\\
	det(S) &=det(((\nu-\alpha k_1 n_2+\beta(2\alpha-1) \Gamma_{A_{\alpha}(G_2)}(\nu -2\alpha))I_{n_1} -(1+ \beta \Gamma_{A_{\alpha}(G_2)}(\nu -2\alpha)A_\alpha(G_1))).\\
	&=f_{A_\alpha(G_1)}  \left ( \frac{\nu-\alpha k_1 n_2+\beta k_1(2\alpha-1)}{1+\beta \Gamma_{A_\alpha(G_2)}(\nu-2\alpha)} \right )\left [ 1+\beta\Gamma_{A_\alpha(G_2)}(\nu-2\alpha) \right ]^{n_1}.
	\end{align*}	
	Here completes the proof.
	
\end{proof}
\begin{theorem}
	Let $G_1$ be $k_1$-regular graph on $n_1$ vertices and $G_2$ be a $k_2$-regular graph on $n_2$ vertices. Suppose $\sigma(G_1)=\{k_1=\nu_1, \nu_2,...,\nu_n\}$ and $\sigma(G_2)=\{k_2=\eta_1,\eta_2,...,\eta_n\}$ are the $\alpha$-eigenvalues of $G_1$ and $G_2$ respectively. Then the $\alpha$-eigenvalue of $G=G_1\lozenge G_2$ is given by:
	\begin{itemize}
		\item[i.]  The $\alpha$-eigenvalue $\eta_j+2\alpha$ with multiplicity $m_1$ for every non maximal $\alpha$-eigenvalue $\eta_j$ ($j=1,...,m_1$) of $A_\alpha(G_2)$.
		\item[ii.] Two multiplicity one $\alpha$-eigenvalues by solving,\\ $\eta^2 + \eta (-2 \alpha -
		k_2 - k_1 (1 - 3 \alpha + 2 \alpha^2 + \alpha n_2) - \nu_i)+
		k_1 (2 \alpha + k_2) (1 - 3 \alpha +2 \alpha^2 + \alpha n_2)  + (2 \alpha + k_2 + (-1 + \alpha) n_2) \nu_i=0$, \\ for each $\alpha$-eigenvalue $\nu_i$, for ($i=1,...,n_1$), $A_\alpha(G_1)$, and
		\item[iii.] The $\alpha$-eigenvalue $k_2+2\alpha$ with multiplicity $m_1-n_1$(if possible).
	\end{itemize}
\end{theorem}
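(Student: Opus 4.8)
The plan is to specialize Theorem~\ref{edgecoronal} to the case where $G_2$ is also regular, and then read the three families of $\alpha$-eigenvalues off an explicit factorization of $f_{A_\alpha(G)}(\nu)$. First, since $G_2$ is $k_2$-regular, every row sum of $A_\alpha(G_2)=\alpha D(G_2)+\beta A(G_2)$ equals $k_2$, so equation~(\ref{coronal2}) gives $\Gamma_{A_\alpha(G_2)}(\nu-2\alpha)=\frac{n_2}{\nu-2\alpha-k_2}$, whose only pole is $\nu=k_2+2\alpha$. Moreover, regularity of $G_1$ and $G_2$ makes their $A_\alpha$-characteristic polynomials split completely: $f_{A_\alpha(G_1)}(x)=\prod_{i=1}^{n_1}(x-\nu_i)$ and $f_{A_\alpha(G_2)}(x)=\prod_{j=1}^{n_2}(x-\eta_j)$, with $\nu_1=k_1$ and $\eta_1=k_2$ the Perron values.

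Next I would substitute these into the formula of Theorem~\ref{edgecoronal}. Write $\mu=\nu-2\alpha$ and $N=\nu-\alpha k_1 n_2+\beta k_1(2\alpha-1)$. Then $1+\beta\Gamma_{A_\alpha(G_2)}(\mu)=\frac{\mu-k_2+\beta n_2}{\mu-k_2}$, and since $f_{A_\alpha(G_1)}\!\bigl(N/(1+\beta\Gamma_{A_\alpha(G_2)}(\mu))\bigr)=\bigl(1+\beta\Gamma_{A_\alpha(G_2)}(\mu)\bigr)^{-n_1}\prod_{i=1}^{n_1}\bigl(N-\nu_i(1+\beta\Gamma_{A_\alpha(G_2)}(\mu))\bigr)$, the factor $\bigl[1+\beta\Gamma_{A_\alpha(G_2)}(\mu)\bigr]^{n_1}$ appearing in Theorem~\ref{edgecoronal} cancels it. Clearing the remaining denominator $(\mu-k_2)^{n_1}$ against the factor $(\mu-k_2)^{m_1}$ produced by $\eta_1=k_2$ inside $\bigl(f_{A_\alpha(G_2)}(\mu)\bigr)^{m_1}$, I expect to arrive at
\[
f_{A_\alpha(G)}(\nu)=(\mu-k_2)^{m_1-n_1}\Bigl(\prod_{j=2}^{n_2}(\mu-\eta_j)\Bigr)^{m_1}\prod_{i=1}^{n_1}\Bigl(N(\mu-k_2)-\nu_i(\mu-k_2+\beta n_2)\Bigr),
\]
with $\mu=\nu-2\alpha$. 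Since the identity of Theorem~\ref{edgecoronal} is valid away from the single pole $\nu=k_2+2\alpha$, and both sides of the displayed equation are monic polynomials in $\nu$ of degree $n_1+m_1n_2$ agreeing off a finite set, they agree identically; in particular multiplicities can be read from the right-hand side even at $\nu=k_2+2\alpha$.

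The three families now fall out of this factorization. The factor $(\mu-k_2)^{m_1-n_1}$ gives the $\alpha$-eigenvalue $k_2+2\alpha$ with multiplicity $m_1-n_1$, which is meaningful exactly when $m_1\ge n_1$ — the proviso ``if possible'' in (iii). The factor $\bigl(\prod_{j=2}^{n_2}(\mu-\eta_j)\bigr)^{m_1}$ gives $\eta_j+2\alpha$ with multiplicity $m_1$ for each non-maximal $\alpha$-eigenvalue $\eta_j$ of $G_2$, which is part (i). Finally, for each $i$ the factor $N(\mu-k_2)-\nu_i(\mu-k_2+\beta n_2)$ is a monic quadratic in $\nu$ whose two roots are $\alpha$-eigenvalues of $G$; expanding it with $\beta=1-\alpha$, so that $N=\nu-k_1(1-3\alpha+2\alpha^2+\alpha n_2)$ and $\mu-k_2+\beta n_2=\nu-2\alpha-k_2+(1-\alpha)n_2$, one recovers precisely the quadratic displayed in (ii) (there the eigenvalue variable is written $\eta$). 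The step I expect to require the most care is the bookkeeping around the pole $\nu=k_2+2\alpha$: one has to be sure that the two cancellations leave the exponent of $(\mu-k_2)$ equal to exactly $m_1-n_1$ and that the total degrees on the two sides match, so that extending the formula across the pole is legitimate; any accidental coincidences among $k_2+2\alpha$, the numbers $\eta_j+2\alpha$, and the roots of the quadratics simply add the relevant multiplicities. The rest is routine substitution and polynomial expansion.
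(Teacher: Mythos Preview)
Your proposal is correct and follows essentially the same approach as the paper: specialize Theorem~\ref{edgecoronal} using $\Gamma_{A_\alpha(G_2)}(\nu-2\alpha)=n_2/(\nu-2\alpha-k_2)$ from the regularity of $G_2$, then read off parts (i) and (ii) from the resulting factorization. The only stylistic difference is in part~(iii): the paper simply counts that $m_1(n_2-1)+2n_1$ eigenvalues have been accounted for and asserts the remaining $m_1-n_1$ must lie at the unique pole $\nu=k_2+2\alpha$, whereas you clear denominators and invoke a polynomial-identity argument to extend across the pole and extract the factor $(\mu-k_2)^{m_1-n_1}$ directly---a slightly more careful but equivalent justification.
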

\begin{proof}
	As $G_2$ is $k_2$ regular, each row sum of $A_\alpha(G_2)$ is $k_2$, using equation (\ref{coronal2}) $$\Gamma_{A_\alpha(G_2)}(\eta-2\alpha)=\frac{n_2}{\eta-2\alpha-k_2}.$$ The only pole in above equation is $\eta=2\alpha+k_2$, which is equivalent to the maximal $\alpha$-eigenvalue $\eta-2\alpha=k_2$ of $G_2$. Now suppose that $\eta$ is not the pole in above equation. Then by Theorem(\ref{edgecoronal}), one has
	\begin{itemize}
		\item[i.] The $m_1(n_2-1)$ $\alpha$-eigenvalues are $\eta_j+2\alpha$ with multiplicity $m_1$ for every non maximal $\alpha$-eigenvalue $\eta_j$, (for $j=1,...,n_2-1$), of $A_\alpha(G_2)$ and
		\item[ii.] For each $\alpha$-eigenvalue $\nu_i$, for ($i=1,...,n_1$), of $A_\alpha(G_1)$. The $2n_1$ eigenvalues are obtained by solving, which is quadratic in $\eta$, $$\eta-\alpha k_1 n_2+\beta k_1(2\alpha-1)=\nu_i (1+\beta\frac{n_2}{\eta-2\alpha-k_2}),$$
		\begin{multline*}
		\eta^2 - 2 (1 - \alpha) \alpha (-1 + 2 \alpha) k_1 - (1 - \alpha) (-1 + 2 \alpha) k_1k_2 + 2 \alpha^2 k_1 n_2 + \alpha k_1 k_2 n_2\\ + \eta (-2 \alpha + (1 - \alpha) (-1 + 2 \alpha) k_1 - k_2 - \alpha k_1 n_2 - \nu_i + 2 \alpha \nu_i + 	k_2 \nu_i - (1 - \alpha) n_2\nu_i=0,
		\end{multline*}
		\begin{multline*}
		\eta^2 + k_1 (2 \alpha + k_2) (1 - 3 \alpha + 2 \alpha^2 + \alpha n_2) + \eta (-2 \alpha - k_2 - k_1 (1 - 3 \alpha + 2 \alpha^2\\ + \alpha n_2) -\nu_i) + (2 \alpha + k_2 + (-1 + \alpha) n_2) \nu_i=0.
		\end{multline*}
	\end{itemize}
	We get $m_1(n_2-1)+2n_1$, $\alpha$-eigenvalues of $G$. The other $n_1+m_1n_2-(m_1(n_2-1)+2n_1)=m_1-n_1$ $\alpha$-eigenvalues of $G$ must come from the only pole $\eta=k_2+2\alpha$ of  $\Gamma_{A_\alpha(G_2)}(\eta-2\alpha)$.
\end{proof}

Next we will give a little more description of $\alpha$-eigenvalue of $G_1\lozenge G_2$ when $G_1$ is $k_1$-regular and $G_2=K_{a,b}$.
\begin{theorem}\label{edgelabelcoronal2}
	Let $G_1$ be $k_1$-regular graph on $n_1$ vertices and $G_2=K_{a,b}$ on $n_2=a+b$ vertices and $m_2$ edges. Suppose $\sigma(G_1)=\{k_1=\nu_1, \nu_2,...,\nu_n\}$. Then the $\alpha$-eigenvalue of $G=G_1\lozenge G_2$ is given by:
	\begin{itemize}
		\item[i.] The $\alpha$-eigenvalue $\alpha(a+2)$ with multiplicity $m_1(b-1)$.
		\item[ii.] The $\alpha$-eigenvalue $\alpha(b+2)$ with multiplicity $m_1(a-1)$.
		\item[iii.] For each $\alpha$-eigenvalue $\nu_i$ ($i=1,2,...,n_1$) of $A_\alpha(G_1)$, $\eta_{i1}+2\alpha, \eta_{i2}+2\alpha$ and $\eta_{i3}+2\alpha$ are three $\alpha$-eigenvalues of $G$ where $\eta_{i1}, \eta_{i2}$ and $\eta_{i3}$ are three roots of cubic polynomial polynomial in $\eta$
		\begin{multline*}
		\eta^3+(((3-2 \alpha ) \alpha-\nu _i -1) k_1-\alpha  (k+1) n_2)\eta ^2+\eta (a (2 \alpha -1) b+n_2 ((2 \alpha -1) \nu _i \\ +(\alpha -1) (2 \alpha -1) \alpha  k_1+\alpha ^2 k n_2))-\nu _i (-(a b+(\alpha -1) \alpha  n_2^2))a (1-2 \alpha ) \alpha  b k n_2 \\-a (1-2 \alpha )^2 (\alpha -1) b k_1=0.
		\end{multline*}
		\item[iv.] The $\alpha$-eigenvalue $\{\frac{\alpha (n+4) \pm \sqrt{\alpha^2n^2+4ab(1-2\alpha)}}{2}\}$ with multiplicity $m_1-n_1$(if possible).
	\end{itemize}
	
\end{theorem}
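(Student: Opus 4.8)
The plan is to specialise the factorisation of Theorem~\ref{edgecoronal} to the case $G_2=K_{a,b}$ and then locate the roots of each factor. First I would recall from the proof of Theorem~\ref{coronalofcomplete} that
\[
\Gamma_{A_\alpha(K_{a,b})}(\eta)=\frac{\eta n_2-\alpha n_2^2+2ab}{\eta^2-\alpha n_2\eta+(2\alpha-1)ab},
\]
and that $\sigma(K_{a,b})=\{\theta_+,[\alpha a]^{b-1},[\alpha b]^{a-1},\theta_-\}$, where $\theta_\pm=\tfrac12\bigl(\alpha n_2\pm\sqrt{\alpha^2n_2^2+4ab(1-2\alpha)}\bigr)$ are precisely the two roots of the denominator of $\Gamma_{A_\alpha(K_{a,b})}$, while $\alpha a$ and $\alpha b$ are \emph{not} roots of that denominator. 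Substituting $\Gamma_{A_\alpha(K_{a,b})}$ and $f_{A_\alpha(K_{a,b})}$ into the formula of Theorem~\ref{edgecoronal} then expresses $f_{A_\alpha(G)}(\nu)$ as a product of the three factors $\bigl(f_{A_\alpha(K_{a,b})}(\nu-2\alpha)\bigr)^{m_1}$, $f_{A_\alpha(G_1)}\!\bigl(\tfrac{\nu-\alpha k_1n_2+\beta k_1(2\alpha-1)}{1+\beta\,\Gamma_{A_\alpha(K_{a,b})}(\nu-2\alpha)}\bigr)$, and $\bigl(1+\beta\,\Gamma_{A_\alpha(K_{a,b})}(\nu-2\alpha)\bigr)^{n_1}$, and everything will be read off from this.

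Next I would treat the factors one at a time. The first factor $\bigl(f_{A_\alpha(K_{a,b})}(\nu-2\alpha)\bigr)^{m_1}$ vanishes exactly when $\nu-2\alpha\in\sigma(K_{a,b})$; at $\nu=\alpha a+2\alpha=\alpha(a+2)$ and $\nu=\alpha b+2\alpha=\alpha(b+2)$ the coronal $\Gamma_{A_\alpha(K_{a,b})}(\nu-2\alpha)$ is finite (these are not poles) and the third factor is nonzero, so these values survive as $\alpha$-eigenvalues of $G$ with multiplicities $m_1(b-1)$ and $m_1(a-1)$, giving (i) and (ii). On the complement of the pole set, the remaining roots come from $f_{A_\alpha(G_1)}\bigl(\tfrac{\cdots}{1+\beta\Gamma}\bigr)=0$, i.e. from
\[
\frac{\nu-\alpha k_1n_2+\beta k_1(2\alpha-1)}{1+\beta\,\Gamma_{A_\alpha(K_{a,b})}(\nu-2\alpha)}=\nu_i
\]
for each $\alpha$-eigenvalue $\nu_i$ of $A_\alpha(G_1)$. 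Writing $\eta=\nu-2\alpha$ and clearing the denominator $\eta^2-\alpha n_2\eta+(2\alpha-1)ab$ turns this into a monic cubic in $\eta$, whose three roots $\eta_{i1},\eta_{i2},\eta_{i3}$ give the $3n_1$ $\alpha$-eigenvalues $\eta_{i\ell}+2\alpha$; expanding the cubic produces the displayed polynomial in (iii).

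Finally, for (iv) I would do the pole bookkeeping at the two values $\nu=\theta_\pm+2\alpha=\tfrac12\bigl(\alpha(n_2+4)\pm\sqrt{\alpha^2n_2^2+4ab(1-2\alpha)}\bigr)$. There the first factor has a zero of order $m_1$ (each $\theta_\pm$ is simple in $K_{a,b}$), while $\Gamma_{A_\alpha(K_{a,b})}$ has a simple pole, so the third factor contributes a pole of order $n_1$ and the argument of $f_{A_\alpha(G_1)}$ tends to $0$, which is generically not an $\alpha$-eigenvalue of $G_1$; the net order of vanishing of $f_{A_\alpha(G)}$ at each $\theta_\pm+2\alpha$ is therefore $m_1-n_1$, which is (iv). A dimension count $m_1(b-1)+m_1(a-1)+3n_1+2(m_1-n_1)=n_1+m_1n_2$ then confirms that every $\alpha$-eigenvalue has been accounted for. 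The main obstacle I anticipate is making this last pole-order argument fully rigorous, in particular excluding the degenerate case in which $0$ is an $\alpha$-eigenvalue of $G_1$ (which is exactly why (iv) is hedged with ``if possible''); on the computational side, the only real work is the routine but lengthy expansion and simplification of the cubic in (iii).
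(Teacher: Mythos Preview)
Your proposal is correct and follows essentially the same route as the paper: specialise Theorem~\ref{edgecoronal} to $G_2=K_{a,b}$, feed in the $A_\alpha$-coronal and spectrum of $K_{a,b}$ computed in Theorem~\ref{coronalofcomplete}, and read off (i), (ii), (iv) from the first and third factors while obtaining the cubic in (iii) from $\nu-\alpha k_1n_2+\beta k_1(2\alpha-1)=\nu_i\bigl(1+\beta\,\Gamma_{A_\alpha(K_{a,b})}(\nu-2\alpha)\bigr)$. Your pole-order bookkeeping for (iv) and the final dimension count are in fact more explicit than what the paper provides, but the underlying argument is the same.
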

\begin{proof} Using Theorem (\ref{coronalofcomplete})and Theorem (\ref{edgecoronal})) we can easily deduce $i, ii$ and $iv$.
	\begin{itemize}
		\item[$\bullet$] Solving $\eta-k_1(1-3\alpha+2\alpha^2+\alpha n_2)=\nu_i (1+\beta \Gamma_{A_{\alpha}(G_2)}(\eta-2\alpha))$,   we get a  little ugly equation
		\begin{multline*}
		\eta ^3+\eta ^2 (-4 \alpha -k_1 (2 \alpha ^2-3 \alpha +\alpha  n_2+1)-\nu +\alpha  n_2) +\eta(2 a (2 \alpha -1) b+4 \alpha  (\alpha +\nu )\\-\alpha  k_1 (n_2-4) (2 \alpha ^2-3 \alpha +\alpha  n_2+1)+(\alpha -1) \nu  n_2^2-\alpha  n_2 (2 \alpha +\nu ))+2 (k_1 (2 \alpha ^2-3 \alpha \\+\alpha  n_2+1) (-2 \alpha ^2-2 a \alpha  b+a b+\alpha ^2 n_2 )+\alpha \nu (-a b-2 \alpha +n_2 (\alpha -(\alpha -1) n_2)))=0.
		\end{multline*}
		
	\end{itemize}
	
\end{proof}

\begin{cor}\label{corr2}
	Suppose that $G_1$, $G_2$ are two non-isomorphic $A_\alpha$-Isospectral $k$-regulars graphs. If $H=K_{a,b}$ then $G_1\lozenge H$ and $G_2\lozenge H$ also are non isomorphic $A_\alpha$-Isospectral graphs.
\end{cor}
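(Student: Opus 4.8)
The plan is to read everything off Theorem~\ref{edgecoronal}. Write $n_1 = |V(G_1)| = |V(G_2)|$ and $n_2 = a+b = |V(H)|$. Since $G_1$ and $G_2$ are $A_\alpha$-Isospectral we have $f_{A_\alpha(G_1)} = f_{A_\alpha(G_2)}$; in particular they have the same order $n_1$, and being connected and $k$-regular they have the same largest $\alpha$-eigenvalue $k$ (Perron--Frobenius, since $A_\alpha(G_i)\mathbf{j_{n_1}} = k\mathbf{j_{n_1}}$), hence the same number of edges $m_1 = n_1 k/2$. Substituting these common data into the formula of Theorem~\ref{edgecoronal} with regular factor $G_i$ and arbitrary factor $H$ gives
$$ f_{A_\alpha(G_i\lozenge H)}(\nu) = \bigl(f_{A_\alpha(H)}(\nu-2\alpha)\bigr)^{m_1}\cdot f_{A_\alpha(G_i)}\!\left(\frac{\nu-\alpha k n_2 + \beta k(2\alpha-1)}{1+\beta\Gamma_{A_\alpha(H)}(\nu-2\alpha)}\right)\bigl[\,1+\beta\Gamma_{A_\alpha(H)}(\nu-2\alpha)\,\bigr]^{n_1}, $$
and the right-hand side is unchanged under $G_1 \leftrightarrow G_2$, because $n_1$, $m_1$, $k$, $\beta$, and $\Gamma_{A_\alpha(H)}$ are common to both while $f_{A_\alpha(G_1)} = f_{A_\alpha(G_2)}$. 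Thus $G_1\lozenge H$ and $G_2\lozenge H$ are $A_\alpha$-Isospectral. (Unlike in Corollary~\ref{corr1}(ii) no hypothesis on coronals is needed: the second factor is literally the same graph $H$.)

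For non-isomorphism I would recover $G_i$ from $G_i\lozenge H$ by a degree argument. If $k\le 1$ then a connected $k$-regular graph is $K_1$ or $K_2$, which would force $G_1\cong G_2$, contrary to hypothesis; so assume $k\ge 2$. In $G_i\lozenge H$ every vertex of $V(G_i)$ has degree $d_{G_i}(u)+k n_2 = k(a+b+1)$, whereas every vertex lying inside one of the $m_1$ copies of $K_{a,b}$ has degree $a+2$ or $b+2$, hence at most $a+b+2$. Since $k\ge 2$ and $a,b\ge 1$,
$$ k(a+b+1)\ \ge\ 2(a+b+1)\ =\ 2a+2b+2\ >\ a+b+2, $$
so $V(G_i)$ is exactly the set of maximum-degree vertices of $G_i\lozenge H$. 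Because the edge corona adds no edge inside the copy of $G_i$, the subgraph induced on $V(G_i)$ is precisely $G_i$. Consequently any isomorphism $G_1\lozenge H \to G_2\lozenge H$ must carry $V(G_1)$ onto $V(G_2)$ (the maximum-degree sets) and therefore restrict to an isomorphism $G_1\to G_2$, contradicting $G_1\not\cong G_2$.

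The spectral half is immediate once Theorem~\ref{edgecoronal} is in hand; the only place that calls for genuine care is the degree bookkeeping in the second half — ruling out a coincidence between the degree $k(a+b+1)$ of the $G_i$-vertices and the degrees $a+2,\,b+2$ of the two colour classes of each $K_{a,b}$, and separately disposing of the degenerate regularities $k\le 1$. I do not anticipate a real obstacle here; the one point where the argument would need to be redone is if one tried to drop the regularity hypothesis on $G_1,G_2$, since then Theorem~\ref{edgecoronal} no longer applies and the maximum-degree characterisation of $V(G_i)$ can break down.
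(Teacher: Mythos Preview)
Your argument is correct. The paper gives no proof of this corollary; it is stated immediately after Theorem~\ref{edgelabelcoronal2} and is meant to follow directly from that spectrum computation. Your route via the characteristic-polynomial formula of Theorem~\ref{edgecoronal} is essentially the same idea, just one step earlier in the chain, and in fact it shows that the hypothesis $H=K_{a,b}$ is not needed for the isospectrality claim: any fixed graph $H$ would do, since the right-hand side of Theorem~\ref{edgecoronal} depends on $G_i$ only through $k$, $n_1$, $m_1$ and $f_{A_\alpha(G_i)}$. (One tiny point you glide over: Theorem~\ref{edgecoronal} is stated only for $\nu$ avoiding the poles of $\Gamma_{A_\alpha(H)}(\nu-2\alpha)$, but two polynomials agreeing off a finite set agree everywhere.)

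The non-isomorphism half is something the paper simply does not address; it treats ``$G_1\not\cong G_2\Rightarrow G_1\lozenge H\not\cong G_2\lozenge H$'' as evident. Your degree argument is a valid way to make this rigorous: for $k\ge 2$ the $G_i$-vertices are exactly the vertices of maximum degree $k(a+b+1)$, the induced subgraph they span is $G_i$ itself (the edge-corona construction adds no edges inside $V(G_i)$), and the degenerate cases $k\le 1$ are impossible under the standing hypothesis that $G_1,G_2$ are connected and non-isomorphic. So your write-up is actually more complete than the paper's.
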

\section{$\alpha$-eigenvalue of $R$-vertex coranae}

Let $V(G_1)=\{u_1,...,u_{n_1}\}$, $I(G)=\{w_1,...,w_{m_1}\}$ and $V(G_2)=\{v_1,...,v_{n_1}\}$. For $i=1,...,n_1$, let $V^i(G_2)=\{v^i_1,...,v^i_{n_2}\}$ be the vertex set of the $i^{th}$ copy of $G_2$. Then $V(G_1\odot G_2)$ has a partition
\begin{equation}\label{partition}
V(G_1)\cup I(G_1)\cup \biggl(\bigcup_{i=1}^{n_1}V^i(G_2)\biggr).
\end{equation}
So the degrees of the vertices of $G_1\odot G_2$ are:
\begin{align*}
d_{G_1\odot G_2}(u_i) &=2d_{G_1}(v_i)+n_2 for i=1,...,n_1,\\
d_{G_1\odot G_2}(w_i) &=2 for  i=1,...,m_1,\\
d_{G_1\odot G_2}(u^i_j) &=d_{G_2}(v_j)+1 for i=1,...,n_1 and for j=1,...,n_2.
\end{align*}
Considering the $\alpha$-eigenvalue of this graph. The equation(\ref{mainequation}), of $G=G_1\odot G_2$ with respect to the partition (\ref{partition})  can be written as fallows,
$$A_\alpha(G)=\begin{bmatrix}
A_\alpha(G_1) & \beta B_1 & \beta I_{n_1}\otimes \mathbf{j^T_{n_2}}  \\
\beta B_1^T& 2\alpha I_{m_1} &\mathbf{O_{m_1\times n_1n_2}} \\
\beta I_{n_1}\otimes \mathbf{j_{n_2}}& \mathbf{O_{m_1\times n_1n_2}}  &I_{n_1}\otimes A_\alpha(G_2)
\end{bmatrix},$$
where $\beta=1-\alpha$
\begin{theorem}\label{vertexcorna}
	Let $G_1$ be an $k_1$-regular graph with $n_1$ vertices, and $G_2$ be an arbitrary graph with $n_2$ vertices. Then the characteristics polynomial of $G=G_1\odot G_2$ is
	\begin{multline*}
	$$f_{A_\alpha(G)}(\nu)=(\nu-2\alpha)^{m_1-n_1} \cdot \prod_{i=1}^{n_2}(\nu-\alpha-\nu_i(G_2))^{n_1} \cdot\\ \prod_{i=1}^{n_1}\Biggl(\nu^2-(k' +\beta^2\Gamma_{A_\alpha(G_2)}(\nu-k') +2\alpha+\nu_i(G_1))\nu\\+2\alpha k'+2\alpha\beta^2\Gamma_{A_\alpha(G_2)}(\nu-k')+k'\beta(2\alpha-1)-\beta \nu_i(G_1)\Biggr),$$
	\end{multline*}
	where $k'=\alpha(k_1+n_2)$.	
\end{theorem}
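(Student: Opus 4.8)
The plan is to evaluate $f_{A_\alpha(G)}(\nu)=\det\big(\nu I-A_\alpha(G)\big)$ directly from the block form of $A_\alpha(G)$ attached to the partition (\ref{partition}), via a single Schur--complement reduction against the lower--right corner. The structural point is that the $(m_1+n_1n_2)\times(m_1+n_1n_2)$ lower--right corner of $\nu I-A_\alpha(G)$ is \emph{block diagonal}: the subdivision vertices forming $I(G_1)$ are pairwise non-adjacent and joined to no copy of $G_2$, so this corner equals
\[
D=\begin{bmatrix}(\nu-2\alpha)I_{m_1}&\mathbf{0}\\ \mathbf{0}&I_{n_1}\otimes\big((\nu-\alpha)I_{n_2}-A_\alpha(G_2)\big)\end{bmatrix}.
\]
By Kronecker multiplicativity of determinants, $\det D=(\nu-2\alpha)^{m_1}\big(f_{A_\alpha(G_2)}(\nu-\alpha)\big)^{n_1}=(\nu-2\alpha)^{m_1}\prod_{i=1}^{n_2}\big(\nu-\alpha-\nu_i(G_2)\big)^{n_1}$, and $D$ is invertible whenever $\nu\ne 2\alpha$ and $\nu-\alpha\notin\sigma(A_\alpha(G_2))$; one argues on this cofinite set of $\nu$ and then extends to all $\nu$, both sides being polynomials once denominators are cleared.

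Writing $\nu I-A_\alpha(G)=\begin{bmatrix}X&Y\\ Y^{T}&D\end{bmatrix}$, the top--left block is $X=(\nu-k')I_{n_1}-A_\alpha(G_1)$, where $k'=\alpha(k_1+n_2)$ is the extra diagonal mass coming from the degrees $d_{G}(u_i)=2k_1+n_2$ of the $G_1$-vertices inside $G=G_1\odot G_2$, and $Y=\big[\,{-\beta}B_1\ \ \ {-\beta}\,I_{n_1}\otimes\mathbf{j_{n_2}^{T}}\,\big]$. The Schur--complement determinant formula then gives $f_{A_\alpha(G)}(\nu)=\det(D)\cdot\det(S)$ with
\[
S=X-YD^{-1}Y^{T}=(\nu-k')I_{n_1}-A_\alpha(G_1)-\frac{\beta^{2}}{\nu-2\alpha}\,B_1B_1^{T}-\beta^{2}\Big(\mathbf{j_{n_2}^{T}}\big((\nu-\alpha)I_{n_2}-A_\alpha(G_2)\big)^{-1}\mathbf{j_{n_2}}\Big)I_{n_1},
\]
using $(I_{n_1}\otimes\mathbf{j_{n_2}^{T}})(I_{n_1}\otimes M^{-1})(I_{n_1}\otimes\mathbf{j_{n_2}})=(\mathbf{j_{n_2}^{T}}M^{-1}\mathbf{j_{n_2}})I_{n_1}$ for $M=(\nu-\alpha)I_{n_2}-A_\alpha(G_2)$. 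By (\ref{coronal}) the last scalar is $\Gamma_{A_\alpha(G_2)}(\nu-\alpha)$; and here the regularity of $G_1$ is indispensable, since Remark~(\ref{remark4}) turns $B_1B_1^{T}=Q(G_1)$ into the \emph{linear} expression $\beta^{-1}\big(A_\alpha(G_1)-(2\alpha-1)k_1I_{n_1}\big)$. Substituting, $S$ collapses to the shape $p(\nu)I_{n_1}-q(\nu)A_\alpha(G_1)$ with $p,q$ explicit rational functions of $\nu$ sharing the denominator $\nu-2\alpha$.

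Since $G_1$ is regular, $A_\alpha(G_1)$ is real symmetric and commutes with $I_{n_1}$, so $S$ is orthogonally diagonalizable and $\det S=\prod_{i=1}^{n_1}\big(p(\nu)-q(\nu)\,\nu_i(G_1)\big)$. Writing each factor over the denominator $\nu-2\alpha$ extracts $(\nu-2\alpha)^{-n_1}$; multiplying by $\det D$ converts the prefactor to $(\nu-2\alpha)^{m_1-n_1}$---the exponent appearing in the statement---while $\big(f_{A_\alpha(G_2)}(\nu-\alpha)\big)^{n_1}$ is carried along unchanged. It remains only to expand $(\nu-2\alpha)\big(p(\nu)-q(\nu)\nu_i(G_1)\big)$, substitute $\beta=1-\alpha$ and $k'=\alpha(k_1+n_2)$, and collect powers of $\nu$ to reach the displayed monic quadratic. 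I expect this last simplification---making all the $\alpha$-, $\beta$-, $k_1$- and $n_2$-contributions, together with the three diagonal shifts ($k'$ on $V(G_1)$, $2\alpha$ on $I(G_1)$, $\alpha$ on each copy of $G_2$), coalesce into the stated coefficients---to be the main and essentially only computational obstacle; everything before it is routine block linear algebra, and a short remark on the generic domain (and on the tacit convention $m_1\ge n_1$ needed to read $(\nu-2\alpha)^{m_1-n_1}$ as a polynomial factor) completes the argument.
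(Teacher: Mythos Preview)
Your argument is correct and follows essentially the same route as the paper: a Schur--complement reduction of the $3\times3$ block form of $\nu I-A_\alpha(G)$, invoking the coronal $\Gamma_{A_\alpha(G_2)}$ for the $G_2$-copies and the regularity identity $B_1B_1^{T}=\beta^{-1}\big(A_\alpha(G_1)-(2\alpha-1)k_1I\big)$ for the incidence block. The only cosmetic difference is that the paper eliminates the $G_2$-block and the $I(G_1)$-block in two successive Schur steps while you handle the block--diagonal $(m_1+n_1n_2)$-corner in one pass; your coronal argument $\nu-\alpha$ is the correct shift (matching the paper's own intermediate line $I_{n_1}\otimes((\nu-\alpha)I_{n_2}-A_\alpha(G_2))$), the appearance of $\nu-k'$ in the displayed statement being a typographical slip.
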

\begin{proof}
	Considering the partition  (\ref{partition}), we have
	\begin{align*}
	A_\alpha(G) &=\begin{bmatrix}
	A_\alpha(G_1)+\alpha(k_1+n_2)I_{n_1} & \beta B_1 & \beta I_{n_1}\otimes \mathbf{j^T_{n_2}}  \\
	\beta B_1^T& 2\alpha I_{m_1} &\mathbf{O_{m_1\times n_1n_2}} \\
	\beta I_{n_1}\otimes \mathbf{j_{n_2}}& \mathbf{O_{m_1\times n_1n_2}}  &I_{n_1}\otimes (A_\alpha(G_2)+\alpha I_{n_2})
	\end{bmatrix}.\\
	f_{A_\alpha(G)}(\nu) &=det\begin{bmatrix}
	(\nu-\alpha(k_1+n_2))I_{n_1}-A_\alpha(G_1) & -\beta B_1 & -\beta I_{n_1}\otimes \mathbf{j^T_{n_2}}  \\
	-\beta B_1^T& (\nu-2\alpha) I_{m_1} &\mathbf{O_{m_1\times n_1n_2}} \\
	-\beta I_{n_1}\otimes \mathbf{j_{n_2}}& \mathbf{O_{m_1\times n_1n_2}}  &I_{n_1}\otimes ((\nu-\alpha I_{n_2})-A_\alpha(G_2))
	\end{bmatrix},\\
	&=det(I_{n_1}\otimes ((\nu-\alpha I_{n_2})-A_\alpha(G_2)) )\cdot det S\\
	&=\prod_{i=1}^{n_2}(\nu-\alpha-\nu_i(G_2))^{n_1} \cdot det S.
	\end{align*}
	Where \\
	\begin{multline*}
	S=\begin{bmatrix}
	(\nu-\alpha(k_1+n_2))I_{n_1}-A_\alpha(G_1) & -\beta B_1 &\\
	-\beta B_1^T& (\nu-2\alpha) I_{m_1}
	\end{bmatrix}\\-\begin{bmatrix}
	\beta I_{m_1}\otimes \mathbf{j^T_{n_2}}\\
	\boldsymbol{O_{m_1\times n_1n_2}}
	\end{bmatrix}(I_{n_1}\otimes((x-\alpha)I_{n_2}-A_\alpha(G_2)))^{-1}\begin{bmatrix}
	\beta I_{m_1}\otimes \mathbf{j_{n_2}} & \boldsymbol{O_{m_1\times n_1n_2}}
	\end{bmatrix},
	\end{multline*}
	Assume $\alpha(k_1+n_{2})=k'$
	\begin{align*}
	S=\begin{bmatrix}
	(\nu-k')I_{n_1}-A_\alpha(G_1) & -\beta B_1 &\\
	-\beta B_1^T& (\nu-2\alpha) I_{m_1}
	\end{bmatrix}-\begin{bmatrix}
	\beta^2I_{n_1}\otimes\mathbf{j^T}(\nu-k'I_{n_2}-A_\alpha(G_2))^{-1}\mathbf{j} & \mathbf{O}\\
	\mathbf{O}&\mathbf{O}
	\end{bmatrix},
	\end{align*}
	Using Remark(\ref{remark4}) and equation (\ref{coronal}) we have,
	\begin{align*}
	S &=\begin{bmatrix}
	(\nu-k'-\beta^2\Gamma_{A_\alpha(G_2)}(\nu-k'))I_{n_1}-A_\alpha(G_1) & -\beta B_1\\
	-\beta B_1^T & (\nu-2\alpha)I_{m_1}
	\end{bmatrix},\\
	det S &= (x-2\alpha)^{m_1}det((\nu-k'-\beta^2\Gamma_{A_{\alpha}(G_2)}(\nu-k'))I_{n_1}-A_\alpha(G_2)-\frac{\beta^2 B_1B_1^T}{\nu-2\alpha}),\\
	&=(x-2\alpha)^{m_1}det((\nu-k'-\beta^2\Gamma_{A_{\alpha}(G_2)}(\nu-k')+\frac{k_1\beta(2\alpha-1)}{\nu-2\alpha})I_{n_1}-(1+\frac{\beta}{\nu-2\alpha})A_\alpha(G_1)),
	\end{align*}
	\begin{multline*}
	=(x-2\alpha)^{m_1-n_1}\cdot \prod_{i=1}^{n_1}(\nu^2-(k' +\beta^2\Gamma_{A_\alpha(G_2)}(\nu-k')\\+2\alpha+\nu_i(G_1))\nu+2\alpha k'+2\alpha\beta^2\Gamma_{A_\alpha(G_2)}(\nu-k')+k'\beta(2\alpha-1)-\beta \nu_i(G_1)).
	\end{multline*}
	Now the result fallows easily.	
\end{proof}	
\begin{cor}
	Let $G_1$ be an $k_1$-regular graph with $n_1$ vertices,and $G_2$ be an $k_2$-regular graph with $n_2$ vertices. Then
	\begin{multline*}
	f_{A_\alpha(G)}(\nu)=(\nu-2\alpha)^{m_1-n_1} \cdot \prod_{i=1}^{n_2}((\nu-\alpha-\nu_i(G_2))^{n_1}) \cdot\\ \prod_{i=1}^{n_1}\Biggl(\nu^3-(2k' +k_2+2\alpha+\nu_i(G_1))\nu^2+((k'+k_2)(k'+2\alpha+\nu_i(G_1))-n_2\beta^2+2\alpha k'\\+\beta(k_1(2\alpha-1)-\nu_i(G_2)))\nu-(k'+k_2)(2\alpha k'+k_1\beta+\beta\nu_i(G_2))+2\alpha\beta^2k'\Biggr),
	\end{multline*}
	where $k'=\alpha(k_1+n_2)$.
	
\end{cor}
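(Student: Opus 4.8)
The plan is to specialize Theorem \ref{vertexcorna} to the case in which $G_2$ is $k_2$-regular. The one extra fact needed is that for a $k_2$-regular graph every row sum of $A_\alpha(G_2)$ equals $k_2$, so by equation (\ref{coronal2}) the $A_\alpha(G_2)$-coronal collapses to the explicit rational function $\Gamma_{A_\alpha(G_2)}(\theta)=\frac{n_2}{\theta-k_2}$. Evaluating it at the argument used in Theorem \ref{vertexcorna} gives $\Gamma_{A_\alpha(G_2)}(\nu-k')=\frac{n_2}{\nu-k'-k_2}$, a rational function with a single simple pole, located at the value of $\nu$ corresponding to the regularity eigenvalue $k_2=\nu_1(G_2)$ of $G_2$.

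First I would substitute this rational function into the product of quadratic factors in the formula of Theorem \ref{vertexcorna}. Each of the $n_1$ quadratic factors then becomes a ratio with denominator $\nu-k'-k_2$; multiplying each of them through by $\nu-k'-k_2$ turns it into a monic cubic in $\nu$. Doing so introduces $n_1$ extra copies of the linear factor $\nu-k'-k_2$, and these are exactly matched by the $n_1$ copies of the corresponding linear factor sitting inside $\prod_{i=1}^{n_2}(\nu-\alpha-\nu_i(G_2))^{n_1}$ that come from the regularity eigenvalue; what remains of that product is the contribution of the non-principal eigenvalues of $G_2$, while the prefactor $(\nu-2\alpha)^{m_1-n_1}$ is left untouched.

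The remaining work is the expansion: clear the denominator in each quadratic factor, collect the powers of $\nu$, and use $\beta=1-\alpha$ to recast the coefficients in the stated closed form. The $\nu^3$- and $\nu^2$-coefficients are read off immediately, while the $\nu^1$- and $\nu^0$-coefficients are where the combinations $(k'+k_2)(k'+2\alpha+\nu_i(G_1))-n_2\beta^2+2\alpha k'+\beta(k_1(2\alpha-1)-\nu_i(G_2))$ and $(k'+k_2)(2\alpha k'+k_1\beta+\beta\nu_i(G_2))-2\alpha\beta^2 k'$ arise. Reassembling the surviving eigenvalue factors of $G_2$ with these $n_1$ cubics and the prefactor yields the displayed formula.

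I expect the only real obstacle to be computational bookkeeping: keeping the three eigenvalue-dependent quantities ($\nu_i(G_1)$, $\nu_i(G_2)$, and the pole value built from $k_2$) straight while clearing denominators, and confirming that the pole cancellation respects multiplicities so that $\deg f_{A_\alpha(G)}$ is still $n_1(1+n_2)+m_1$, the order of $G_1\odot G_2$. Once Theorem \ref{vertexcorna} and equation (\ref{coronal2}) are in hand there is no conceptual difficulty.
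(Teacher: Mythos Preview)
Your approach is exactly the paper's: its entire proof reads ``Using Theorem~\ref{vertexcorna} and equation~(\ref{coronal2}) it is obvious,'' and you are simply spelling out that substitution-and-clear-denominators step. One caution on your bookkeeping: the linear factor you extract when clearing denominators is $\nu-k'-k_2$ (the pole of $\Gamma_{A_\alpha(G_2)}(\nu-k')$), whereas the factor you propose to cancel it against is $\nu-\alpha-k_2$, and these do not coincide; this mismatch traces to a typo in Theorem~\ref{vertexcorna} (its own proof shows the coronal argument should be $\nu-\alpha$, not $\nu-k'$) and also accounts for the displayed corollary over-counting the degree of $f_{A_\alpha(G)}$ by $n_1$.
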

\begin{proof}
	Using Theorem(\ref{vertexcorna}) and equation(\ref{coronal2}) it is obvious.
\end{proof}
\begin{cor}\label{corr3}
	\begin{itemize}
		\item[i.] If $G_1$ and $G_2$ are $\alpha$-Isospectral regular graphs, and $H$ is an arbitrary	graph, then $G_1 \odot H$ and $G_2\odot H$ are $\alpha$-Isospectral.
		\item[ii.] If $G$ is a regular graph, and $H_1$ and $H_2$ are $\alpha$-Isospectral graphs with $\Gamma_{A_{\alpha}(H_1)}(\nu)=\Gamma_{A_{\alpha}(H_2)}(\nu)$, then $G_1 \odot H$ and $G_2\odot H$ are $\alpha$-Isospectral.
		\item[iii.] If $G_1$ and $G_2$ are $\alpha$-Isospectral regular graphs, and $H_1$ and $H_2$ are $\alpha$-Isospectral graphs with $\Gamma_{A_{\alpha}(H_1)}(\nu)=\Gamma_{A_{\alpha}(H_2)}(\nu)$, then $G_1 \odot H$ and $G_2\odot H$ are $\alpha$-Isospectral.
	\end{itemize}
\end{cor}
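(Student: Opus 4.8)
The three assertions all follow by inspecting the closed form for $f_{A_\alpha(G_1\odot G_2)}(\nu)$ supplied by Theorem~\ref{vertexcorna} and checking that, under each hypothesis, the two graphs being compared are fed identical data into that formula. The plan is as follows.

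First I would record which quantities the formula of Theorem~\ref{vertexcorna} actually depends on: the order $n_1$ and the regularity parameter $k_1$ of the (regular) first factor $G_1$ — hence also $m_1=n_1k_1/2$ — the $\alpha$-spectrum $\{\nu_i(G_1)\}$, the order $n_2$ and the $\alpha$-spectrum $\{\nu_i(G_2)\}$ of the second factor, and the coronal $\Gamma_{A_\alpha(G_2)}$; here $\beta=1-\alpha$ and $k'=\alpha(k_1+n_2)$ are functions of the preceding data. Two elementary bookkeeping remarks are needed: (a) $\alpha$-isospectral graphs have the same number of vertices (the number of $\alpha$-eigenvalues, counted with multiplicity); and (b) if a graph is regular then its degree is determined by its $\alpha$-spectrum — for instance as $\mathrm{tr}\,A_\alpha/(\alpha\,n)=2\alpha m/(\alpha n)$ when $\alpha>0$, and as the largest eigenvalue of $A$ when $\alpha=0$ — so two $\alpha$-isospectral regular graphs share the same degree and hence the same number of edges.

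For (i), if $G_1$ and $G_2$ are $\alpha$-isospectral regular graphs then by (a) and (b) they agree in $n_1$, $k_1$, $m_1$ and in their $\alpha$-spectrum; substituting either of them, together with the common arbitrary graph $H$, into the formula of Theorem~\ref{vertexcorna} yields one and the same polynomial, so $G_1\odot H$ and $G_2\odot H$ are $\alpha$-isospectral, and non-isomorphism is inherited from $G_1\not\cong G_2$. For (ii) — where the intended comparison is between $G\odot H_1$ and $G\odot H_2$ for a fixed regular graph $G$ — the hypotheses give that $H_1$ and $H_2$ share the order $n_2$, the $\alpha$-spectrum $\{\nu_i(\cdot)\}$, and, by assumption, the coronal $\Gamma_{A_\alpha(\cdot)}$; since then $k'=\alpha(k_1+n_2)$ also coincides, every argument of the formula that varies with the second factor is unchanged, and the two $A_\alpha$-characteristic polynomials agree. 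Part (iii) is the conjunction of the reasoning in (i) and (ii): under its hypotheses all of $n_1,k_1,m_1,\{\nu_i(G)\},n_2,\{\nu_i(H)\},\Gamma_{A_\alpha(H)}$ match for the two products, so again the polynomials coincide.

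There is essentially no analytic obstacle here; the one point that deserves a line of care is remark (b) — that regularity together with $\alpha$-cospectrality pins down the degree, and therefore $m_1$ — because $m_1$ appears explicitly in the exponent $(\nu-2\alpha)^{m_1-n_1}$ in Theorem~\ref{vertexcorna} and is not otherwise read off from the spectrum. Once that is in place, each of (i)--(iii) is a one-line substitution into the theorem.
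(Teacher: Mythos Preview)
Your proposal is correct and follows exactly the route the paper intends: the corollary is stated without proof as an immediate consequence of Theorem~\ref{vertexcorna}, and your argument simply makes explicit the bookkeeping---in particular the observation that $\alpha$-cospectral regular graphs share $n_1$, $k_1$, and hence $m_1$---that the paper leaves to the reader. Your interpretation of the evident typos in parts~(ii) and~(iii) (the conclusions should read $G\odot H_1$, $G\odot H_2$ and $G_1\odot H_1$, $G_2\odot H_2$, respectively) is also the intended one.
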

\section{$\alpha$-eigenvalue of $R$-edge coronae}

Let $V(G_1)=\{u_1,...,u_{n_1}\}$, $I(G)=\{w_1,...,w_{m_1}\}$ and $V(G_2)=\{v_1,...,v_{n_1}\}$. For $i=1,...,m_1$, let $V^i(G_2)=\{v^i_1,...,v^i_{n_2}\}$ be the vertex set of the $i^{th}$ copy of $G_2$. Then $V(G_1\odot G_2)$ has a partition
\begin{equation}\label{partition1}
V(G_1)\cup I(G_1)\cup \biggl(\bigcup_{i=1}^{m_1}V^i(G_2)\biggr).
\end{equation}
So the degrees of the vertices of $G_1\odot G_2$ are:
\begin{align*}
d_{G_1\circleddash G_2}(u_i) &=2d_{G_1}(v_i) for i=1,...,n_1,\\
d_{G_1\circleddash G_2}(w_i) &=2+n_2 for  i=1,...,m_1,\\
d_{G_1\circleddash G_2}(u^i_j) &=d_{G_2}(u_j)+1 for i=1,...,m_1 and for j=1,...,n_2.
\end{align*}
Considering the $\alpha$-eigenvalue of this graph. The equation(\ref{mainequation}), of $G=G_1\circleddash G_2$ with respect to the partition (\ref{partition1})  can be written as fallows,
$$A_\alpha(G)=\begin{bmatrix}
A_\alpha(G_1)+\alpha k_1I_{n_1} & \beta B_1 & \mathbf{O_{n_1\times m_1n_2}}  \\
\beta B_1^T& (n_2+2)\alpha I_{m_1} & \beta I_{m_1}\otimes \mathbf{j^T_{n_2}} \\
\mathbf{O_{m_1n_2\times n_1}}& \beta I_{m_1}\otimes \mathbf{j_{n_2}}  &I_{m_1}\otimes (A_\alpha(G_2)+\alpha I_{n_1})
\end{bmatrix},$$
where $\beta=1-\alpha$.
\begin{theorem}\label{Redgecorona}
	Let $G_1$ be an $k_1$-regular graph with $n_1$ vertices and $m_1$ edges, and $G_2$ be an arbitrary graph with $n_2$ vertices. Then
	\begin{multline*}
	f_{A_\alpha(G)}(\nu)=(\nu-r)^{m_1-n_1} \cdot \prod_{i=1}^{n_2}(\nu-\alpha-\nu_i(G_2))^{m_1}\cdot\\ \prod_{i=1}^{n_1}(\nu^2-(r+\alpha k_1+\nu_i(G_1))\nu+(\alpha r+\beta(2\alpha-1)) k_1+(r -\beta)\nu_i(G_1)),
	\end{multline*}
	where $(n_2+2)\alpha+\Gamma_{A_\alpha(G_2)}(\nu-\alpha)=r$.
\end{theorem}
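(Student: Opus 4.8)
The plan is to mirror, step for step, the nested block-determinant computation used for the $R$-vertex corona in Theorem~\ref{vertexcorna}, exploiting that in $G_1\circleddash G_2$ the copies of $G_2$ are glued to the subdivision vertices $I(G_1)$ only. Consequently the incidence matrix $B_1$ couples $V(G_1)$ with $I(G_1)$, while the block linking $V(G_1)$ with the $G_2$-copies vanishes. First I would write $\nu I-A_\alpha(G)$ as the $3\times3$ block matrix coming from the displayed form of $A_\alpha(G)$, with diagonal blocks $(\nu-\alpha k_1)I_{n_1}-A_\alpha(G_1)$, $(\nu-(n_2+2)\alpha)I_{m_1}$ and $I_{m_1}\otimes((\nu-\alpha)I_{n_2}-A_\alpha(G_2))$, and the only nonzero off-diagonal blocks involving $\beta B_1$ and $\beta I_{m_1}\otimes\mathbf{j_{n_2}}$.

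The first reduction is the Schur-complement identity (item~v of Section~2) with respect to the bottom-right block. Since $\det\bigl(I_{m_1}\otimes((\nu-\alpha)I_{n_2}-A_\alpha(G_2))\bigr)=\prod_{i=1}^{n_2}(\nu-\alpha-\nu_i(G_2))^{m_1}$, this immediately pulls out the asserted middle factor. The correction term affects only the $(2,2)$-block, where by the Kronecker identities (item~iv) it equals $\beta^2\bigl(I_{m_1}\otimes(\mathbf{j_{n_2}^T}((\nu-\alpha)I_{n_2}-A_\alpha(G_2))^{-1}\mathbf{j_{n_2}})\bigr)=\beta^2\Gamma_{A_\alpha(G_2)}(\nu-\alpha)I_{m_1}$ by equation~(\ref{coronal}). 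Writing $r=(n_2+2)\alpha+\beta^2\Gamma_{A_\alpha(G_2)}(\nu-\alpha)$ (the $r$ of the statement), the Schur complement collapses to the $2\times2$ block matrix
\[
S=\begin{bmatrix}(\nu-\alpha k_1)I_{n_1}-A_\alpha(G_1) & -\beta B_1\\ -\beta B_1^T & (\nu-r)I_{m_1}\end{bmatrix}.
\]

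Next I would apply the Schur-complement identity once more, now with respect to $(\nu-r)I_{m_1}$ (valid when $\nu\ne r$), obtaining $\det S=(\nu-r)^{m_1}\det\bigl((\nu-\alpha k_1)I_{n_1}-A_\alpha(G_1)-\tfrac{\beta^2}{\nu-r}B_1B_1^T\bigr)$. The crucial input here is Remark~\ref{remark4}: since $G_1$ is $k_1$-regular, $B_1B_1^T=\beta^{-1}\bigl(A_\alpha(G_1)-(2\alpha-1)k_1I_{n_1}\bigr)$, which turns the matrix inside the last determinant into $\bigl(\nu-\alpha k_1+\tfrac{\beta(2\alpha-1)k_1}{\nu-r}\bigr)I_{n_1}-\bigl(1+\tfrac{\beta}{\nu-r}\bigr)A_\alpha(G_1)$. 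Factoring out $1+\tfrac{\beta}{\nu-r}=\tfrac{\nu-r+\beta}{\nu-r}$ and diagonalizing through the $\alpha$-eigenvalues $\nu_i(G_1)$ of $A_\alpha(G_1)$ gives $\det S=(\nu-r)^{m_1-n_1}\prod_{i=1}^{n_1}\bigl[(\nu-r)(\nu-\alpha k_1)+\beta(2\alpha-1)k_1-(\nu-r+\beta)\nu_i(G_1)\bigr]$; expanding the bracket yields exactly $\nu^2-(r+\alpha k_1+\nu_i(G_1))\nu+(\alpha r+\beta(2\alpha-1))k_1+(r-\beta)\nu_i(G_1)$. Combining this with the factor from the first reduction gives the claimed formula, and the excluded values $\nu=r$ and $\nu=\alpha+\nu_i(G_2)$ are recovered at the end by continuity, since both sides are polynomials in $\nu$.

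I expect the main obstacle to be bookkeeping rather than conceptual: carrying the rational function $r=r(\nu)$ through the two nested Schur reductions and then clearing the denominators $(\nu-r)$ and $(\nu-r+\beta)$ so that the spurious poles cancel and the exponent of $(\nu-r)$ correctly drops from $m_1$ to $m_1-n_1$. One should also check carefully that $B_1$ does not connect $V(G_1)$ to the $G_2$-copies — only $I(G_1)$ does — which is precisely what makes the $(1,3)$ and $(3,1)$ blocks vanish and keeps the computation parallel to, but slightly cleaner than, the $R$-vertex corona case.
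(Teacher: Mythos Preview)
Your proposal is correct and follows exactly the same route as the paper: write $\nu I-A_\alpha(G)$ as the $3\times3$ block matrix, take the Schur complement with respect to the $I_{m_1}\otimes((\nu-\alpha)I_{n_2}-A_\alpha(G_2))$ block to peel off $\prod_i(\nu-\alpha-\nu_i(G_2))^{m_1}$ and absorb the coronal into the $(2,2)$-entry, then take a second Schur complement with respect to $(\nu-r)I_{m_1}$ and invoke Remark~\ref{remark4} for $B_1B_1^T$. One small remark: your $r=(n_2+2)\alpha+\beta^2\Gamma_{A_\alpha(G_2)}(\nu-\alpha)$ carries the $\beta^2$ coming from the two factors $\beta I_{m_1}\otimes\mathbf{j_{n_2}}$, whereas the theorem as stated (and the paper's own derivation) writes $r$ without the $\beta^2$; your version is the one the computation actually produces.
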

\begin{proof}
	Considering the partition (\ref{partition1})
	\begin{align*}
	f_{A_\alpha(G)}(\nu)&=\begin{bmatrix}
	(\nu-\alpha k_1)I_{n_1} A_\alpha(G_1) & -\beta B_1 & \mathbf{O_{n_1\times m_1n_2}}  \\
	-\beta B_1^T& (\nu-(n_2+2)\alpha) I_{m_1} & -\beta I_{m_1}\otimes \mathbf{j^T_{n_2}} \\
	\mathbf{O_{m_1n_2\times n_1}}& -\beta I_{m_1}\otimes \mathbf{j_{n_2}}  &I_{m_1}\otimes ((\nu-\alpha) I_{n_1}-A_\alpha(G_2))
	\end{bmatrix}\\
	&=det(I_{m_1}\otimes ((\nu-\alpha) I_{n_1}-A_\alpha(G_2)))\cdot detS,\\
	&=\prod_{i=1}^{n_2}(\nu-\alpha-\nu_i(G_2))^{m_1}\cdot det S.
	\end{align*}
	Where\\
	\begin{multline*}
	S=\begin{bmatrix}
	(\nu-\alpha k_1)I_{n_1}-A_\alpha(G_1) &-\beta B_1 \\
	\beta B_1^T& (\nu-(n_2+2)\alpha)I_{m_1}
	\end{bmatrix}\\-\begin{bmatrix}
	\mathbf{O_{n_1\times m_1n_2}}\\ \beta I_{m_1}\otimes \mathbf{j^T_{n_2}}
	\end{bmatrix}(I_{m_1}\otimes ((\nu-\alpha)I-A_\alpha(G_2))^{-1}) \begin{bmatrix}
	\mathbf{O_{m_1n_2\times n_1}} & \beta I_{m_1}\otimes \mathbf{j_{n_2}}
	\end{bmatrix}
	\end{multline*}
	\begin{align*}
	&=\begin{bmatrix}
	(\nu-\alpha k_1)I_{n_1}-A_\alpha(G_1) &-\beta B_1 \\
	\beta B_1^T& (\nu-(n_2+2)\alpha)I_{m_1}
	\end{bmatrix}-\begin{bmatrix}
	\mathbf{O} & \mathbf{O}\\
	\mathbf{O} & \Gamma_{A_\alpha(G_2)}(\nu-\alpha)I_{m_1}
	\end{bmatrix}\\
	&=\begin{bmatrix}
	(\nu-\alpha k_1)I_{n_1}-A_\alpha(G_1) &-\beta B_1 \\
	\beta B_1^T& (\nu-(n_2+2)\alpha+\Gamma_{A_\alpha(G_2)}(\nu-\alpha))I_{m_1}
	\end{bmatrix}
	\end{align*}
	Consider $(n_2+2)\alpha+\Gamma_{A_\alpha(G_2)}(\nu-\alpha)=r$. Using Remark(\ref{remark4}) and equation (\ref{coronal}) we have,
	\begin{align*}
	detS&=det((\nu-r)I_{m_1})\cdot det ((\nu-\alpha k_1)I_{n_1}-A_\alpha(G_1)-\frac{\beta B_1B_1^T}{\nu-r}), \\
	&=(\nu-r)^{m_1} \cdot det((\nu-\alpha k_1)I_{n_1}-A_\alpha(G_1)-\frac{\beta (A_\alpha(G_1)-(2\alpha-1)k_1I_{n_1})}{(\nu-r}),\\
	&=(\nu-r)^{m_1}\cdot det((\nu-\alpha k_1+\frac{\beta (2\alpha-1)k_1}{(\nu-r)})I_{n_1}-(1+\frac{\beta}{\nu-r})A_\alpha(G_2)),\\
	\end{align*}
	$=(\nu-r)^{m_1-n_1}\prod_{i=1}^{n_1}(\nu^2-(r+\alpha k_1+\nu_i(G_1))\nu+(\alpha r+\beta(2\alpha-1)) k_1+(r -\beta)\nu_i(G_1))$\\
	
	from which the results fallows.
\end{proof}
\begin{cor}\label{corr4}
	\begin{itemize}
		\item[i.] If $G_1$ and $G_2$ are $\alpha$-Isospectral regular graphs, and $H$ is an arbitrary graph, then $G_1\circleddash H$ and $G_2 \circleddash H$ are $\alpha$-Isospectral.
		\item[ii.] If $G$ is a regular graph, and $H_1$ and $H_2$ are $\alpha$-Isospectral with $\Gamma_{A_{\alpha}(H_1)}(\nu)=\Gamma_{A_{\alpha}(H_2)}(\nu)$, then $G\circleddash H_1$ and $G \circleddash H_2$ are $\alpha$-Isospectral.
	\end{itemize}
\end{cor}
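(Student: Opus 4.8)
The plan is to read both parts off the closed formula of Theorem~\ref{Redgecorona}, by identifying exactly which invariants of $G_1$ and $G_2$ the $A_\alpha$ characteristic polynomial of $G_1\circleddash G_2$ depends on. For a $k_1$-regular graph $G_1$ on $n_1$ vertices with $m_1$ edges and an arbitrary graph $G_2$ on $n_2$ vertices, that theorem writes $f_{A_\alpha(G_1\circleddash G_2)}(\nu)$ as the product of $(\nu-r)^{m_1-n_1}$, of $\prod_{i=1}^{n_2}(\nu-\alpha-\nu_i(G_2))^{m_1}$, and of $\prod_{i=1}^{n_1}\bigl(\nu^2-(r+\alpha k_1+\nu_i(G_1))\nu+(\alpha r+\beta(2\alpha-1))k_1+(r-\beta)\nu_i(G_1)\bigr)$, where $r=(n_2+2)\alpha+\Gamma_{A_\alpha(G_2)}(\nu-\alpha)$ and $\beta=1-\alpha$. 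The key observation is that the right-hand side is built only from the integers $n_1,m_1,k_1$ and the multiset $\{\nu_i(G_1)\}$ of $\alpha$-eigenvalues of $G_1$, together with the integer $n_2$, the multiset $\{\nu_i(G_2)\}$, and the rational function $\Gamma_{A_\alpha(G_2)}(\nu)$ attached to $G_2$. Thus two $R$-edge coronae produced by the construction of Theorem~\ref{Redgecorona} are $\alpha$-isospectral as soon as these data agree.

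For part~(i), write $H$ for the common second factor. I would first record that an $\alpha$-isospectral pair of regular graphs has the same order---the number of $\alpha$-eigenvalues, counted with multiplicity, equals the number of vertices---and the same regularity: for a $k$-regular graph $A_\alpha(G)\mathbf{j_n}=k\mathbf{j_n}$, while $A_\alpha(G)$ has nonnegative entries with all row sums equal to $k$, so $k$ is precisely the largest $\alpha$-eigenvalue and is therefore determined by $\sigma(G)$. Hence $G_1$ and $G_2$ share the same $n_1$, the same $k_1$, and consequently the same $m_1=n_1k_1/2$. Applying Theorem~\ref{Redgecorona} to $G_1\circleddash H$ and to $G_2\circleddash H$, every ingredient of the formula---$n_1,m_1,k_1$, the multiset $\{\nu_i(G_1)\}=\{\nu_i(G_2)\}$, and the $H$-data $|V(H)|$, $\{\nu_i(H)\}$, $\Gamma_{A_\alpha(H)}(\nu)$---is the same on both sides, so $f_{A_\alpha(G_1\circleddash H)}=f_{A_\alpha(G_2\circleddash H)}$ and the two graphs are $\alpha$-isospectral.

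For part~(ii) the first factor $G$ is fixed, so its contribution $n_1,m_1,k_1,\{\nu_i(G)\}$ to the formula is identical on the two sides; it remains only to check that the data of the second factor does not change when $H_1$ is replaced by $H_2$. Since $H_1$ and $H_2$ are $\alpha$-isospectral they have the same order $n_2$ and the same multiset of $\alpha$-eigenvalues, and by hypothesis $\Gamma_{A_\alpha(H_1)}(\nu)=\Gamma_{A_\alpha(H_2)}(\nu)$, so $r$ is literally the same function of $\nu$ in the two cases; Theorem~\ref{Redgecorona} then gives $f_{A_\alpha(G\circleddash H_1)}=f_{A_\alpha(G\circleddash H_2)}$. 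Since the whole argument amounts to substituting equal data into the formula of Theorem~\ref{Redgecorona}, there is essentially no obstacle; the one step that is not quite immediate is the remark used in~(i) that $\alpha$-isospectral regular graphs have equal degree, which is exactly what is needed for the exponents $m_1-n_1$, $m_1$ and the constant $k_1$ to match.
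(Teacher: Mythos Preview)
Your argument is correct and is exactly the intended one: the paper states this corollary without proof, leaving it as an immediate consequence of Theorem~\ref{Redgecorona}, and you have carried out precisely that verification. Your extra remark that $\alpha$-isospectral regular graphs share $n_1$, $k_1$ and hence $m_1$ is the only point that genuinely needs checking, and you handle it correctly.
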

\section*{Discussion} In \cite{XLSL}, the $A_\alpha$-characteristics polynomial for all graphs on at most $10$ vertices have been enumerated. Also counted the  number of graphs for which there exist at least one pair of non-isomorphic $\alpha$-Isospectral graphs. They get the smallest pair of non-isomorphic $\alpha$-Isospectral graphs with $9$ vertices, as an example see Figure(\ref{figure3}).
\begin{figure}[h!]
	\includegraphics[width=70mm,scale=0.7]{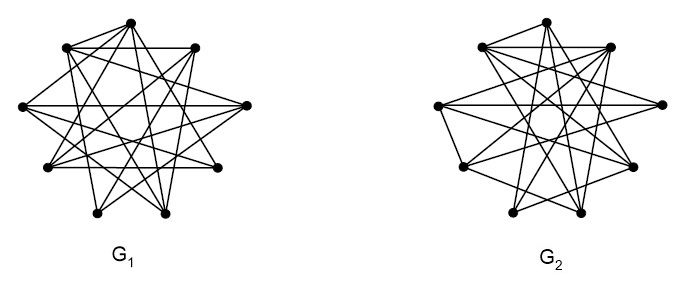}
	\centering
	\caption{Smallest pair of non-isomorphic $\alpha$-Isospectral graphs. }
	\label{figure3}
\end{figure}\\
Using the above mentioned graphs, in Corollary(\ref{corr1}),(\ref{corr2}),(\ref{corr3}) and Corollary(\ref{corr4})  we get  infinitely many non-isomorphic $\alpha$-Isospectral families of graphs.

{99}	
\end{document}